\documentclass[11pt]{amsart}

\usepackage{amssymb}
\usepackage{amsaddr}





\newcommand{\RR}{\mathbb{R}}

\newcommand{\be}{\begin{equation}}
\newcommand{\ee}{\end{equation}}
\newcommand{\beqn}{\begin{eqnarray}}
\newcommand{\eeqn}{\end{eqnarray}}
\newcommand{\beqnno}{\begin{eqnarray*}}
\newcommand{\eeqnno}{\end{eqnarray*}}

\newcommand{\tv} {{\tilde v}}
\newcommand{\tu} {{\tilde u}}

\numberwithin{equation}{section}

\newtheorem{thm}{Theorem}[section]
\newtheorem{prop}[thm]{Proposition}
\newtheorem{cor}[thm]{Corollary}
\newtheorem{lem}[thm]{Lemma}

\newtheorem*{thm*}{Theorem}

\hyphenation{non-ne-ga-ti-ve}
\hyphenation{o-pe-ra-tor}

\begin{document}

\title[Pointwise for Besel heat and Poisson initial data problems]{On the pointwise convergence to initial data of heat and Poisson problems for the Bessel operator}

\author{Isolda Cardoso}

\address{ Departamento de Matem\'{a}tica  \\
  Fac. de Cs. Exactas, Ingenier\'{i}a y Agrimensura  \\
 Universidad Nacional de Rosario \\
  Pellegrini 250, 2000 Rosario
 }
 \email{isolda@fceia.unr.edu.ar }

\thanks{The author is partially supported by  SeCyT, Universidad Nacional de Rosario and a grant from Consejo Nacional de Investigaciones Cient\'{i}ficas y T\'{e}cnicas (CONICET).}

\keywords{Pointwise convergence to initial data, Bessel operator, heat equation, Poisson equation}

\subjclass[2010]{Primary:  35E15, 35K05, 35J05, 42B37}

\begin{abstract}
We find optimal integrability conditions on the initial data $f$ for the existence of solutions $e^{-t\Delta_{\lambda}}f(x)$ and $e^{-t\sqrt{\Delta_{\lambda}}}f(x)$ of the heat and Poisson initial data problems for the Bessel operator $\Delta_{\lambda}$ in $\RR^{+}$. We also characterize the most general class of weights $v$ for which the solutions converge a.e. to $f$ for every $f\in L^{p}(v)$, with $1\le p<\infty$. Finally, we show that for such weights and $1<p<\infty$ the local maximal operators are bounded from $L^{p}(v)$ to $L^{p}(u)$, for some weight $u$.
\end{abstract}

\maketitle

\section{Introduction}

\par The starting point for us are the classical heat and Poisson equations in the upper halfplane. For a nonnegative, second order differential operator $L$ we have the initial data problems
\begin{equation}\label{heat.eqn}\tag{\textbf{h}}
 \left\{ \begin{array}{rcll}
				 u_{t} \quad =& -Lu,&  t\in (0,T), \\
				  u(0,\cdot)\quad=&f(\cdot);&
				  \end{array} \right.
\end{equation}
\begin{equation}\label{Poisson.eqn}\tag{\textbf{P}}
  \left\{ \begin{array}{rcll}
				 u_{tt} \quad=& Lu, & t\in (0,\infty), \\
				  u(0,\cdot)\quad=&f(\cdot).&
				  \end{array} \right.
\end{equation}

\par In several classic examples it is well known that under certain conditions on the initial data $f$, for example when $L$ is the Laplacian on $\RR^{d}$ and $f\in L^{p}$ (or even some weighted $L^{p}(v)$), the solutions to the problems \textbf{\ref{heat.eqn}} and \textbf{\ref{Poisson.eqn}} verify $$\lim\limits_{t\to 0^{+}} u(t,x)=f(x).$$ Moreover, they can be described by the heat diffusion semigroup $u(t,x)=e^{-tL}f(x)$ and the Poisson semigroup $u(t,x)=e^{-t\sqrt{L}}f(x)$, respectively, with $L$ being its infinitesimal generator. In order to obtain a pointwise convergence of the solutions to the initial data it is necessary to study  the boundedness in $L^{p}$ of the corresponding maximal operators (see for example \cite{S}).

\par In the present notes we are interested in one particular differential operator: the Bessel operator in $\RR^{+}$, namely $\Delta_{\lambda}$, for $\lambda>-\frac{1}{2}$ as appears in the works of Muckenhoupt and Stein (see for example \cite{MS}).

\par Let us first consider the Bessel heat equation. We want to find optimal integrability conditions on the initial data $f$ such that
\begin{itemize}
\item[(I)] $u(t,x)=e^{-t\Delta_{\lambda}}f(x)$ exists for all $t,x>0$ as an absolutely convergent integral, satisfies the Bessel heat equation and
\item[(II)] $\lim\limits_{t\to 0^{+}} u(t,x)=f(x)$ for a.e. $x>0$.
\end{itemize}
The same is asked when we consider the Poisson equation for the Bessel operator, with $u(t,x)=e^{-t\sqrt{\Delta_{\lambda}}}f(x)$.

\par When $L$ is the Laplacian in $\RR^{d}$, Hartzstein, Torrea and Viviani in \cite{HTV} characterized the class of weights $D_{p}^{heat}(L)$ and $D_{p}^{Poisson}(L)$ for which the corresponding solutions have limits almost everywhere, respectively, for functions in $L^{p}(\RR^{+},vdx)$. Also, this problem is solved when $L$ is the Hermite or the Ornstein-Uhlenbeck operator in \cite{GHSTV} and in \cite{GHSTV1} for the Laguerre operator.

\par Here, in the Bessel setting for the heat and Poisson initial data problems, we also try to answer the natural question: \textit{Is there a weight class which characterizes convergence to initial data for functions in $L^{p}(\RR^{+},vdx)$?} This is, we want to find optimal conditions in a weight $v$ such that (I) and (II) hold for all $f\in L^{p}(v)$.

\par The final question that we want to adress is the $L^{p}(w)$ boundness of the corresponding local maximal operators, namely
\begin{align*}
W^{\lambda,\ast}_{a}f(x)=&\sup\limits_{0<t<a} |e^{-tL}f(x)|, \\
P_{a}^{\ast}f(x)=&\sup\limits_{0<t<a} |e^{-t\sqrt{L}}f(x)|.
\end{align*}
We want to show that for all weights $v$ in the class $D_{p}^{heat}(\Delta_{\lambda})$, the local maximal operator $W^{\lambda,\ast}_{a}$ maps $L^{p}(v)$ to $L^{p}(u)$, boundedly, for some weight $u$. Also, the analogous problem involving weights in $D_{p}^{Poisson}(\Delta_{\lambda})$ and the local maximal operator $P^{\lambda,\ast}_{a}$ will be treated.\\
In the Bessel context sharp power- weighted $L^p$ inequalities for the (global) heat and Poisson maximal operators are obtained in \cite{BHNV}.

\par In order to study the above problems we will follow the techniques developed in \cite{HTV}, \cite{GHSTV} and \cite{GHSTV1}.

\par More precisely, our main results are the following:

\begin{thm}\label{thm:Main}
Let $\lambda\geq0$,  $T>0$ (possibly $T=\infty$) and let $f:\RR^{+}\to\RR$ be a measurable function such that
	\begin{equation}
	\label{condition.Main}
		\int\limits_{0}^{\infty} \phi(y) |f(y)|dy<\infty,
	\end{equation}
where $\phi$ is the integrability factor $$\phi(y)=\phi_{t}^{\lambda}(y)= y^{\lambda}\left(\frac{y}{y+1}\right)^{\lambda} e^{-{\frac{y^{2}}{4t}}},\,\,\,\,\quad t\in(0,T), $$ for problem \textbf{(h)} or $$\phi(y)=\phi^{\lambda}(y)= {\frac{y^{2\lambda}}{(y^{2}+1)^{\lambda+1}}}$$ for problem \textbf{(P)}. Then the heat or Poisson integral $u(t,x)$ defines, respectively, an absolutely convergent integral such that
	\begin{itemize}
		\item[(i)] $u\in C^{\infty}((0,T)\times\RR^{+})$ and  satisfies the heat or Poisson equation for the Bessel operator $\Delta_{\lambda}$, and
		\item[(ii)] $\lim\limits_{t\to 0^{+}} u(t,x)=f(x)$ for a.e. $x>0$.
	\end{itemize}
Conversely, if a nonnegative function $f$ satisfies that its heat integral for $t\in(0,T)$ and some $x>0$ or its Poisson integral is finite for some $t\in(0,\infty)$ and some $x>0$ then $f$ must satisfy condition \ref{condition.Main}.
\end{thm}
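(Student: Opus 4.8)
The plan is to reduce the whole statement to the explicit kernels of the two semigroups together with sharp two–sided size estimates for them in the space variable. For the heat problem the semigroup acts by $u(t,x)=\int_{0}^{\infty}W_{t}^{\lambda}(x,y)f(y)\,y^{2\lambda}dy$, where
\[
W_{t}^{\lambda}(x,y)=\frac{(xy)^{1/2-\lambda}}{2t}\,I_{\lambda-1/2}\!\Bigl(\frac{xy}{2t}\Bigr)\,e^{-(x^{2}+y^{2})/4t},
\]
and for the Poisson problem the kernel $\mathcal P_{t}^{\lambda}(x,y)$ is obtained from $W_{t}^{\lambda}$ by subordination. Writing $\mathcal W_{t}^{\lambda}(x,y)=W_{t}^{\lambda}(x,y)\,y^{2\lambda}$ for the kernel relative to $dy$, the first step is to record, from the asymptotics $I_{\lambda-1/2}(z)\asymp z^{\lambda-1/2}$ for $0<z\le1$ and $I_{\lambda-1/2}(z)\asymp z^{-1/2}e^{z}$ for $z\ge1$, the estimates: a lower bound $\mathcal W_{t}^{\lambda}(x,y)\ge c(x,t)\,\phi_{t}^{\lambda}(y)$, an upper bound $\mathcal W_{t}^{\lambda}(x,y)\le C(x,t,t_{1})\,\phi_{t_{1}}^{\lambda}(y)$ for each $t_{1}>t$, and $\mathcal P_{t}^{\lambda}(x,y)\asymp_{x,t}\phi^{\lambda}(y)$, all uniformly in $y>0$. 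The parameter loss $t\mapsto t_{1}$ in the heat upper bound is harmless, since $\phi_{t}^{\lambda}$ is increasing in $t$ and one only needs $t_{1}<T$; it simply reflects that the Gaussian $e^{-(x-y)^{2}/4t}$ in $\mathcal W_{t}^{\lambda}$ is dominated by $e^{-y^{2}/4t_{1}}$ but not by $e^{-y^{2}/4t}$.

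Granting these estimates, part (i) is routine. Absolute convergence of the integral defining $u$ at every $(t,x)\in(0,T)\times\RR^{+}$ follows from the upper bound and \eqref{condition.Main} (taking $t_{1}\in(t,T)$). For $C^{\infty}$ regularity I would differentiate under the integral sign: by the recurrences for $I_{\lambda-1/2}$ the derivatives $\partial_{t}^{j}\partial_{x}^{k}W_{t}^{\lambda}(x,y)$ are of the same form times rational factors in $x$, $y$, $1/t$, so on a compact subset of $(0,T)\times\RR^{+}$ they are dominated by $(1+y)^{M}\phi_{t_{2}}^{\lambda}(y)$ for some $M$ and some $t_{2}<T$, and the polynomial factor is absorbed into a slightly fatter $\phi_{t_{3}}^{\lambda}$ with $t_{3}<T$, which is integrable against $|f|\,dy$; since $W_{t}^{\lambda}$ and $\mathcal P_{t}^{\lambda}$ solve the heat and Poisson equations for $\Delta_{\lambda}$ in $(t,x)$, so does $u$. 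The Poisson case is identical.

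For part (ii) I would localize: it suffices to show $u(t,x)\to f(x)$ for a.e.\ $x\in(1/N,N)$ and then let $N\to\infty$. Split $f=f\chi_{[0,R]}+f\chi_{(R,\infty)}$ with $R>2N$. Since $\phi_{t}^{\lambda}(y)\asymp_{R,t}y^{2\lambda}$ on $[0,R]$, the piece $f\chi_{[0,R]}$ lies in $L^{1}(y^{2\lambda}dy)$; as $(\RR^{+},|\cdot|,y^{2\lambda}dy)$ is a space of homogeneous type and $\mathcal W_{t}^{\lambda}$, $\mathcal P_{t}^{\lambda}$ have Gaussian/Poisson bounds as $t\to0^{+}$, the local maximal operators are of weak type $(1,1)$ with respect to $y^{2\lambda}dy$ (a classical fact, cf.\ \cite{MS}), and together with convergence for $f\in C_{c}(\RR^{+})$ — which follows from $\int_{0}^{\infty}\mathcal W_{t}^{\lambda}(x,y)\,dy=1$ and the concentration of $\mathcal W_{t}^{\lambda}(x,\cdot)$ at $y=x$ — this gives $e^{-t\Delta_{\lambda}}(f\chi_{[0,R]})(x)\to f(x)$ a.e.\ on $(1/N,N)$. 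For the tail one uses the crude inequality $(x-y)^{2}\ge y^{2}/4$ valid for $x\in(1/N,N)$, $y>R$, which gives $\mathcal W_{t}^{\lambda}(x,y)\le C(N,t_{0})\,R^{-1/2}\,\phi_{8t_{0}}^{\lambda}(y)$ uniformly for $0<t<t_{0}$ and $x\in(1/N,N)$ (with $8t_{0}<T$), so the tail contributes at most $C(N,t_{0})R^{-1/2}\int_{0}^{\infty}\phi_{8t_{0}}^{\lambda}|f|\,dy$, which tends to $0$ as $R\to\infty$; taking $\limsup_{t\to0^{+}}$ and then $R\to\infty$ finishes the heat case, and the Poisson case is the same with $\phi^{\lambda}$ in place of $\phi_{t}^{\lambda}$.

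For the converse, if $f\ge0$ and the heat integral at some $(t,x)$ is finite, then since $\mathcal W_{t}^{\lambda}(x,y)\ge c(x,t)\phi_{t}^{\lambda}(y)$ and $f\ge0$ we get $\int_{0}^{\infty}\phi_{t}^{\lambda}|f|\,dy<\infty$, that is \eqref{condition.Main} for that $t$ (hence for all smaller $t$, by monotonicity); the Poisson statement is immediate from $\mathcal P_{t}^{\lambda}(x,y)\ge c(x,t)\phi^{\lambda}(y)$, with no $t$–dependence to track. The hypothesis $f\ge0$ is used exactly here, to prevent cancellation from making the integral finite while $\int_{0}^{\infty}\mathcal W_{t}^{\lambda}(x,\cdot)|f|\,dy=\infty$. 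The main obstacle, and where the real work lies, is the sharp two–sided control of the kernels uniformly in $y$: for the heat kernel this requires careful treatment of the transition region $xy\asymp t$, where $I_{\lambda-1/2}$ changes regime, and for the Poisson kernel it requires carrying out the subordination integral against $W_{s}^{\lambda}$, in both cases keeping honest track of how the constants depend on $x$ and $t$.
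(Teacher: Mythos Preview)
Your proposal is correct and follows essentially the same route as the paper: both reduce everything to sharp two-sided size estimates for $W_{t}^{\lambda}(x,y)y^{2\lambda}$ (via the asymptotics of $I_{\lambda-1/2}$ in the regimes $xy\lesssim t$ and $xy\gtrsim t$) and for $P_{t}^{\lambda}(x,y)y^{2\lambda}$ (via subordination), then use the upper bound for absolute convergence and $C^{\infty}$ regularity, the lower bound for the converse, and a local/tail split for a.e.\ convergence. The only cosmetic differences are that the paper invokes Stein's symmetric diffusion semigroup maximal theorem for the local $L^{1}$ piece where you invoke weak-$(1,1)$ on the homogeneous-type space $(\RR^{+},|\cdot|,y^{2\lambda}dy)$, and the paper makes the tail small simply via $\int_{y>M}\phi_{t_{0}}^{\lambda}|f|\,dy\to0$ rather than extracting an explicit $R^{-1/2}$ factor (which, incidentally, your argument actually yields as $R^{-1}$).
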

\par From Theorem \ref{thm:Main} we see that the conditions on initial data $f$ for the existence of solutions cover a wide class of functions. For example, $f(y)= P(y)e^{\frac{1}{4T}y^{2}}$ for $y>1$ with $P$ any polynomial and $f(y)= y^{-2\lambda-\epsilon}$ if $y\leq1$ and $0<\epsilon<1$,  is good enough to grant existence of solutions to Problem \textbf{(h)}, and $f(y)=\frac{y}{\log^\beta(y+e)}$ for $y>1,  \beta >1$ and $f(y)= y^{-2\lambda-\epsilon}$ if $y\leq1$ with  $0<\epsilon<1$, is admissible for the
existence of the solution of  Problem \textbf{(P)} with $L=\triangle_{\lambda}$.
\par The classes \begin{equation}\label{B.weight.class} D^{heat}_{p}(\Delta_{\lambda}) \mbox{ and } D^{Poisson}_{p}(\Delta_{\lambda}) \end{equation} consist of all the weights $v:\RR^{+}\to\RR^{+}$ such that properties (i) and (ii) from Theorem \ref{thm:Main} hold for every function $f$ in the weighted space $L^{p}(\RR^{+},v)$.

\par From the theorem above we deduce a characterization of the classes $D^{heat}_{p}(\Delta_{\lambda})$ and $D^{Poisson}_{p}(\Delta_{\lambda})$.

\begin{cor}\label{cor:B.weighted.class.characterization}
Let $1\le p<\infty$, let $T>0$ (possibly $T=\infty$) and $\phi_{t}^{\lambda}$ as in Theorem \ref{thm:Main}. A weight $v$ belongs to the class $D_{p}^{heat}(\Delta_{\lambda})$ if and only if $v^{-{\frac{1}{p}}}\phi_{t}^{\lambda}\in L^{p'}(\RR^{+})$, and belongs to the class $D_{p}^{Poisson}(\Delta_{\lambda})$ if and only if $v^{-{\frac{1}{p}}}\phi^{\lambda}\in L^{p'}(\RR^{+})$.
\end{cor}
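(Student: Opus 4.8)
The statement is, in essence, Theorem~\ref{thm:Main} read through a standard $L^{p}$--duality computation, so the plan is to isolate that duality fact and then feed it the two integrability factors. Concretely, the bridge is the following elementary fact: for any fixed nonnegative measurable $\psi$ on $\RR^{+}$ and any weight $v$,
$$\int_{0}^{\infty}\psi(y)\,|f(y)|\,dy<\infty \ \ \text{ for every } f\in L^{p}(v)\qquad\Longleftrightarrow\qquad v^{-1/p}\psi\in L^{p'}(\RR^{+}).$$
Once this is proved, applying it with $\psi=\phi_{t}^{\lambda}$ (for problem \textbf{(h)}, for each $t\in(0,T)$) and with $\psi=\phi^{\lambda}$ (for problem \textbf{(P)}) gives both halves of the corollary.

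To prove the fact, substitute $g=|f|\,v^{1/p}$. Since $v$ is positive and finite a.e., $f\mapsto g$ is a bijection between the nonnegative functions of $L^{p}(v)$ and those of $L^{p}(\RR^{+},dx)$, with $\int_{0}^{\infty}\psi\,|f|=\int_{0}^{\infty}\bigl(v^{-1/p}\psi\bigr)\,g$. So, writing $h:=v^{-1/p}\psi$, the equivalence reduces to: $\int_{0}^{\infty}h\,g<\infty$ for all $0\le g\in L^{p}(dx)$ if and only if $h\in L^{p'}(dx)$. The direction ``$\Leftarrow$'' is H\"older's inequality. For ``$\Rightarrow$'', suppose $h\notin L^{p'}$. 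If $1<p<\infty$, take bounded compactly supported truncations $h_{k}=\min(h,k)\mathbf{1}_{[1/k,k]}\uparrow h$, so $\|h_{k}\|_{p'}\uparrow\infty$, and set $g_{k}=h_{k}^{p'-1}\,\|h_{k}\|_{p'}^{1-p'}$, so that $\|g_{k}\|_{p}=1$ and $\int h\,g_{k}\ge\int h_{k}g_{k}=\|h_{k}\|_{p'}$. Passing to a subsequence with $\|h_{k}\|_{p'}\ge 4^{k}$ and putting $g=\sum_{k}2^{-k}g_{k}\in L^{p}$ gives $\int h\,g\ge\sum_{k}2^{k}=\infty$. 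If $p=1$ (so $p'=\infty$), the essential supremum of $h$ is infinite, so one can choose sets $B_{k}\subset\RR^{+}$ of finite positive measure on which $h>k^{3}$; then $g=\sum_{k}(k^{2}|B_{k}|)^{-1}\mathbf{1}_{B_{k}}\in L^{1}$ and $\int h\,g\ge\sum_{k}k=\infty$.

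It remains to combine this with Theorem~\ref{thm:Main}. Fix problem \textbf{(h)} and $\psi=\phi_{t}^{\lambda}$. If $v^{-1/p}\phi_{t}^{\lambda}\in L^{p'}(\RR^{+})$ for every $t\in(0,T)$, then by the fact above every $f\in L^{p}(v)$ satisfies condition~\ref{condition.Main}, so Theorem~\ref{thm:Main} yields (i) and (ii); hence $v\in D_{p}^{heat}(\Delta_{\lambda})$. Conversely, if $v\in D_{p}^{heat}(\Delta_{\lambda})$, then for every nonnegative $f\in L^{p}(v)$ property (i) in particular makes $e^{-t\Delta_{\lambda}}f(x)$ an absolutely convergent integral for all $(t,x)\in(0,T)\times\RR^{+}$, so the converse half of Theorem~\ref{thm:Main} forces condition~\ref{condition.Main}, i.e. $\int_{0}^{\infty}\phi_{t}^{\lambda}f<\infty$ for all $t\in(0,T)$. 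Since this holds for all nonnegative elements of $L^{p}(v)$, the fact above applied for each fixed $t$ gives $v^{-1/p}\phi_{t}^{\lambda}\in L^{p'}(\RR^{+})$. The Poisson case is identical, with the $t$--independent factor $\phi^{\lambda}$ in place of $\phi_{t}^{\lambda}$.

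I expect no genuine obstacle: all the analytic work is contained in Theorem~\ref{thm:Main}, and what is left here is the converse--to--H\"older construction above (routine) together with some care in keeping the quantifier ``for all $t\in(0,T)$'' consistent on both sides of the equivalence, which is harmless since $t\mapsto\phi_{t}^{\lambda}$ is monotone.
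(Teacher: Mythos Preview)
Your proposal is correct and follows the same approach as the paper: H\"older's inequality for the sufficiency direction and the converse-to-H\"older (Landau's theorem) for necessity, combined with Theorem~\ref{thm:Main}. The only difference is cosmetic---the paper simply cites Landau's theorem from \cite{BS}, whereas you write out a self-contained construction of a bad $g\in L^{p}$ when $h\notin L^{p'}$; your argument is a standard proof of that result and is fine.
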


\par Just like in \cite{HTV}, we have that the weight classes are related by inclusion: $D^{Poisson}_{p}(\Delta_{\lambda})\subsetneq D^{heat}_{p}(\Delta_{\lambda})$. Indeed, let us consider a weight $v$ in the class $D^{Poisson}_{p}(\Delta_{\lambda})$. We have that $\phi_{t}^{\lambda}(y)=(y^{2}+1)^{\frac{\lambda}{2}+1}e^{-\frac{y^{2}}{4t}}\phi^{\lambda}(y)\le c_{t} \phi^{\lambda}(y)$, hence $||v^{-\frac{1}{p}}\phi_{t}^{\lambda}||_{p'}<\infty$. Also, the weight defined by $v(y)=e^{-\frac{y^{2}}{4T}p}$for $y>1$ and $v(y)=y^{2\lambda+\epsilon}$ if $y\leq1$ and $\epsilon<\frac{1}{p'}$, belongs to $D^{heat}_{p}(\Delta_{\lambda})$ and doesn't belong to $D^{Poisson}_{p}(\Delta_{\lambda})$.

\begin{thm}\label{thm:Lpv.Lpu.boundness.maximal.B.heat.operator}
Let $1<p<\infty$, $T>0$ (possibly $T=\infty$)  and $\lambda\geq 0$.
\par (i)  If $v\in D_{p}^{heat}(\Delta_{\lambda})$ then, for every $a\in (0,T)$ there exists a weight $u=u_{a}$ such that
\begin{equation}\label{B.maximal.operator.boundness}
W^{\lambda,\ast}_{a}:L^{p}(v)\to L^{p}(u) \qquad \mbox{ boundedly.}
\end{equation}
Moreover, there exists $\sigma_{0}=\sigma_{0}(a,T)\in (0,1)$ such that for any $\sigma\le\sigma_{0}$ the weight $u$ can be chosen such that also $u^{\sigma}\in D_{p}^{heat}(\Delta_{\lambda})$. (In the case that $T= \infty$ we can choose $u$ such  that $u^{\sigma}\in D_{p}^{heat}(\Delta_{\lambda})$ for all $\sigma < 1$).
\par Conversely, if \ref{B.maximal.operator.boundness} holds for some weight $u=u_{a}$ and each $a\in (0,T)$, then $v\in D_{p}^{heat}(\Delta_{\lambda})$.
\par (ii)  If $v\in D_{p}^{Poisson}(\Delta_{\lambda})$ then, for every $a>0$  there exists a weight $u=u_{a}$ such that
\begin{equation}\label{B.maximal.operator.boundness.P}
P^{\lambda,\ast}_{a}:L^{p}(v)\to L^{p}(u) \qquad \mbox{ boundedly.}
\end{equation}
If $\sigma<1$ we can find $u$ such that $u^{\sigma}\in D_{p}^{Poisson}(\Delta_{\lambda})$.
\par Conversely, if \ref{B.maximal.operator.boundness.P} holds for some $a>0$ and same weight $u=u_{a}$, then $v\in D_{p}^{Poisson}(\Delta_{\lambda})$.
\end{thm}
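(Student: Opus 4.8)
The plan is to reduce the statement to Corollary~\ref{cor:B.weighted.class.characterization} together with standard pointwise bounds for the Bessel heat and Poisson kernels. Writing $e^{-t\Delta_\lambda}f(x)=\int_0^\infty W_t^\lambda(x,y)f(y)\,dy$, the asymptotics of the modified Bessel function entering $W_t^\lambda$ give the two-regime estimate
\[
W_t^\lambda(x,y)\approx t^{-1/2}(y/x)^\lambda e^{-(x-y)^2/4t}\ \ (xy\gtrsim t),\qquad W_t^\lambda(x,y)\approx t^{-\lambda-1/2}y^{2\lambda}e^{-(x^2+y^2)/4t}\ \ (xy\lesssim t),
\]
while the Poisson kernel $P_t^\lambda$ satisfies the analogous bounds with the Gaussians replaced by factors $t/(t^2+(x\mp y)^2)$ (equivalently, one may subordinate $P_t^\lambda$ to $W_s^\lambda$). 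I will carry out (i) in detail and only indicate the easier modifications needed for (ii). Fix $a\in(0,T)$; for $0<t<a$ split the defining integral over $I_x:=(x/2,2x)$ and its complement, so that $W^{\lambda,*}_a f\le W^{\lambda,*,loc}_a f+W^{\lambda,*,glob}_a f$ in the obvious notation.

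For the \emph{global} part, on $\RR^+\setminus I_x$ the quantity $(x-y)^2$ (resp.\ $x^2+y^2$) controls a fixed multiple of $x^2+y^2$, and for $0<t<a$ the $t$-dependent prefactor times $e^{-cx^2/t}$ stays bounded, so $W_t^\lambda(x,y)\le C(a,x)\,\phi_a^\lambda(y)$ uniformly in $t\in(0,a)$. Hölder's inequality then gives the pointwise bound $W^{\lambda,*,glob}_a f(x)\le G(x)\|f\|_{L^p(v)}$ with $G(x)^{p'}\le C(a,x)\int_0^\infty v^{-p'/p}(\phi_a^\lambda)^{p'}<\infty$ for a.e.\ $x$, the finiteness being exactly $v\in D_p^{heat}(\Delta_\lambda)$ via Corollary~\ref{cor:B.weighted.class.characterization}. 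For the \emph{local} part, on $I_x$ the modulation $(y/x)^\lambda$ is comparable to $1$, so $W^{\lambda,*,loc}_a f(x)\le C\,Mf(x)$ with $M$ the Hardy--Littlewood maximal operator; since $v\in D_p^{heat}(\Delta_\lambda)$ forces $v^{-p'/p}\in L^1_{loc}(\RR^+)$, one can select a weight $u_{loc}>0$ with $Mu_{loc}\le v$, and the Fefferman--Stein inequality yields $W^{\lambda,*,loc}_a:L^p(v)\to L^p(u_{loc})$. Taking $u$ comparable to $\min\{u_{loc},(1+G)^{-p}w_0\}$ for a fixed fast-decaying integrable $w_0$ then gives $\int(W^{\lambda,*}_a f)^pu\le C\|f\|_{L^p(v)}^p$, i.e.\ \ref{B.maximal.operator.boundness}; the Poisson operator is handled identically with $\phi^\lambda$ replacing $\phi_a^\lambda$, giving \ref{B.maximal.operator.boundness.P}.

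Next I would adjust $u$ so that $u^\sigma\in D_p^{heat}(\Delta_\lambda)$, i.e.\ $\int u^{-\sigma p'/p}(\phi_t^\lambda)^{p'}<\infty$ for every $t\in(0,T)$. Near the origin $\phi_t^\lambda(y)\approx y^{2\lambda}$ and $u\approx v$, and Hölder with exponents $1/\sigma,1/(1-\sigma)$ bounds $\int_0^1 u^{-\sigma p'/p}y^{2\lambda p'}$ by $(\int_0^1 v^{-p'/p}y^{2\lambda p'})^\sigma(\int_0^1 y^{2\lambda p'})^{1-\sigma}<\infty$ because $\lambda\ge0$; the same interpolation with the constant weight covers compact subsets of $(0,\infty)$. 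For $y\to\infty$ the global contribution forces $u(y)\gtrsim v(y)e^{-\delta y^2}$ for a small $\delta$, and writing $u^{-\sigma/p}\phi_t^\lambda=(v^{-1/p}\phi^\lambda_{t'})^\sigma R(y)^{1-\sigma}$ with $t<t'<T$ the residual factor $R$ has Gaussian decay precisely when $\tfrac{\sigma\delta}{p}-\tfrac1{4t}+\tfrac{\sigma}{4t'}<0$ for all $t<T$; since $t<a<T$ this holds for every $\sigma$ below a threshold $\sigma_0=\sigma_0(a,T)\in(0,1)$, and when $T=\infty$ one lets $t'\to\infty$ so that any $\sigma<1$ works. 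For (ii) the factor $\phi^\lambda$ decays only polynomially in $y$, so no exponential bookkeeping is needed and the interpolation argument directly gives $u^\sigma\in D_p^{Poisson}(\Delta_\lambda)$ for every $\sigma<1$.

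For the \emph{converses}, suppose \ref{B.maximal.operator.boundness} holds for all $a\in(0,T)$. Given any nonnegative $f\in L^p(v)$, boundedness makes $W^{\lambda,*}_a f\in L^p(u)$ finite a.e., and since $W_t^\lambda\ge0$ we get $e^{-t\Delta_\lambda}f(x)\le W^{\lambda,*}_a f(x)<\infty$ for a.e.\ $x$ and each $t\in(0,a)$; the converse part of Theorem~\ref{thm:Main} then gives $\int_0^\infty\phi_t^\lambda f<\infty$, and letting $a\uparrow T$ this holds for all $t\in(0,T)$. If $v^{-1/p}\phi_t^\lambda\notin L^{p'}$ for some such $t$, duality produces a nonnegative $g\in L^p$ with $\int g\,v^{-1/p}\phi_t^\lambda=\infty$; then $f:=gv^{-1/p}\in L^p(v)$ contradicts the previous line, so $v\in D_p^{heat}(\Delta_\lambda)$ by Corollary~\ref{cor:B.weighted.class.characterization}; the Poisson converse is identical. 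The step I expect to be the main obstacle is the local estimate together with the constraint $u^\sigma\in D_p$: the target weight $u$ must be small enough for an essentially unweighted local maximal operator to send $L^p(v)$ into $L^p(u)$ even when $v$ is very irregular away from the origin, yet large enough that $u^{-\sigma/p}\phi_t^\lambda$ remains $p'$-integrable, and reconciling these competing demands is exactly what produces the threshold $\sigma_0(a,T)$ and the role of the gap $a<T$.
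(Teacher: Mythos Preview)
Your overall architecture---split into a near-diagonal piece controlled by a maximal function and a far-from-diagonal piece controlled by $\phi^\lambda_a$ via H\"older---is exactly the paper's approach, and your treatment of the global piece and of the converse is essentially correct. The serious gap is in the local piece and, exactly as you anticipate, in reconciling the two demands on $u$.

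The difficulty is that your construction of $u_{loc}$ is purely an existence statement: ``select $u_{loc}>0$ with $Mu_{loc}\le v$''. Such a weight exists (this is the Rubio de Francia factorization, not really Fefferman--Stein), but an abstract existence result gives no \emph{lower} bound on $u_{loc}$, and a lower bound is precisely what you need to show $\int u^{-\sigma p'/p}(\phi_t^\lambda)^{p'}<\infty$. Your later claims that ``near the origin $u\approx v$'' and ``for $y\to\infty$ the global contribution forces $u(y)\gtrsim v(y)e^{-\delta y^2}$'' are not justified by the construction $u=\min\{u_{loc},(1+G)^{-p}w_0\}$: nothing prevents $u_{loc}$ from being, say, $v$ times an arbitrarily small positive function, and taking $w_0$ ``fast-decaying'' makes matters worse, not better, since then $u^{-\sigma/p}$ blows up at infinity. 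The H\"older interpolation you propose for the $u^\sigma\in D_p$ check therefore has nothing to interpolate against.

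The paper fixes this by making the Rubio de Francia step completely explicit (its Theorem~\ref{thm:Lpv.Lpu.boundness.local.maximal}). One first replaces the full Hardy--Littlewood maximal function by a \emph{truncated} local maximal operator $\mathcal{M}_M^{loc}$ that only averages over $\{|y|<M(|x|\vee1)\}$; this is why the kernel estimate (Lemma~\ref{L.estimate.W}) is stated with the cutoff $\chi_{\{y\le M(x\vee1)\}}$ rather than with your $I_x=(x/2,2x)$. Then one decomposes $\RR^+$ into dyadic shells $E_k=\{2^{k-1}\le x<2^k\}$, applies Rubio de Francia on each $E_k$ to obtain a weight $U_k$ with $\|U_k^{-1}\|_{L^{s/(p-s)}(E_k)}\le1$ and an explicit operator norm $C_k\lesssim |E_k|^{1/s-1}\|v^{-1/p}\chi_{\{|y|<M2^k\}}\|_{p'}$, and sets $u=\sum_k(2^{\gamma k}C_k)^{-p}U_k\chi_{E_k}$. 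Because $C_k$ is now a concrete quantity controlled by the $D_p$-norm of $v$, one can compute $\|u^{-\sigma/p}\phi_t^\lambda\|_{p'}$ as a series in $k$ and check convergence directly; this is what produces the threshold $\sigma_0=1/M^2$, with $M>1$ chosen so that $C_Ma<T$ in the kernel estimate. Without this explicit bookkeeping the $u^\sigma\in D_p$ assertion cannot be closed.
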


\par In section \ref{Preliminaries} we state all we need to recall about the Bessel operator for explicit computations. On sections \ref{heat} and \ref{Poisson} we study the problems stated above and prove Theorem \ref{thm:Main}, Corollary \ref{cor:B.weighted.class.characterization} and Theorem \ref{thm:Lpv.Lpu.boundness.maximal.B.heat.operator} for the heat and Poisson problems associated to the Bessel operator, respectively.

\section{Preliminaries}\label{Preliminaries}

\par Let us consider the Bessel operator as appears in \cite{MS}:
\begin{equation}\label{B.Operator}
			\Delta_{\lambda}=-{\frac{d^{2}}{dx^{2}}} - {\frac{2\lambda}{x}} {\frac{d}{dx}},
	\end{equation}
for $\lambda>-{\frac{1}{2}}$, which is essentially self-adjoint in $L^{2}(\mathbb{R}^{+},d\mu_{\lambda})$, where $\RR^{+}=(0,\infty)$ and $d\mu_{\lambda}(x)=x^{2\lambda}dx$, $x>0$.

\par Let us also consider the heat equation with initial data problem for the Bessel operator
									\begin{equation} \label{B.heat.eqn}
										\left\{	\begin{array}{rcll}
																u_{t}&=&-\Delta_{\lambda}u, & t>0,\\
																u(0,\cdot)&=&f(\cdot);&
														\end{array}
										\right.
									\end{equation}
and the Poisson equation with initial data problem for the Bessel operator
								\begin{equation} \label{B.Poisson.eqn}
										\left\{	\begin{array}{rcll}
																u_{tt}&=&\Delta_{\lambda}u, & t>0,\\
																u(0,\cdot)&=&f(\cdot);&
														\end{array}
										\right.
									\end{equation}

\par The standard set of eigenfunctions of the Bessel operator consists of
	\begin{equation}\label{B.Eigenfunctions}
		\varphi_{z}^{\lambda}(x)=(zx)^{-\lambda+{\frac{1}{2}}} \mathcal{J}_{\lambda-{\frac{1}{2}}}(zx),
	\end{equation}
where $x,z>0$ and $\mathcal{J}_{\nu}$ is the Bessel function of the first kind and order $\nu>-1$. Indeed, for $\lambda>-{\frac{1}{2}}$,
	\begin{equation}\label{B.Eigenvalues}
		\Delta_{\lambda}\varphi_{z}^{\lambda}=z^{2}\varphi_{z}^{\lambda},
	\end{equation}
for $z>0$.

\par The heat kernel associated to $\Delta_{\lambda}$ is
	\begin{equation}\label{B.Heat.Kernel.integral}
		W_{t}^{\lambda}(x,y)=\int\limits_{0}^{\infty} e^{-z^{2}t}\varphi_{z}^{\lambda}(x)\varphi_{z}^{\lambda}(y) d\mu_{\lambda}(z),
	\end{equation}
for $t,x,y>0$. Explicitly, the heat kernel is given by
	\begin{equation}\label{B.Heat.Kernel.explicit}
		W_{t}^{\lambda}(x,y)={\frac{(xy)^{-\lambda+{\frac{1}{2}}}}{2t}} e^{-\frac{(x^{2}+y^{2})}{4t}} \mathcal{I}_{\lambda-{\frac{1}{2}}}\left( {\frac{xy}{2t}}\right),
	\end{equation}
for $t,x,y>0$, where $\mathcal{I}_{\nu}$ is the modified Bessel function of the first kind and order $\nu>-1$. This function verifies (see \cite{L})
\begin{equation}\label{Modified.B.for.small.z}
		\mathcal{I}_{\nu}(z)\sim z^{\nu}, \qquad \qquad \mbox{ if } z<1,
\end{equation}
\begin{equation}\label{Modified.B.for.big.z}
		\mathcal{I}_{\nu}(z)\sim e^{z}z^{-\frac{1}{2}}, \qquad \qquad \mbox{ if } z>1.
\end{equation}

\par For a function $f$, its Bessel heat diffusion integral is
\begin{equation}\label{B.diffusion.integral}
W_{t}^{\lambda}f(x)=\int\limits_{0}^{\infty}W_{t}^{\lambda}(x,y)f(y)d\mu(y),
\end{equation}
for $t,x>0$; and its local maximal operator is defined for $a>0$ by
\begin{equation}\label{B.local.maximal.operator}
W^{\lambda,\ast}_{a}f(x)= \sup\limits_{0<t<a}|W_{t}^{\lambda}f(x)|,
\end{equation}
for $x>0$.

\par The Poisson kernel associated to $\Delta_{\lambda}$ is
	\begin{equation}\label{B.Poisson.Kernel.integral}
	P_{t}^{\lambda}(x,y)=\int\limits_{0}^{\infty} e^{-zt}\varphi_{z}^{\lambda}(x)\varphi_{z}^{\lambda}(y)d\mu_{\lambda}(z),
	\end{equation}
for $t,x,y>0$. By the subordination formula (see for example \cite{S}) we have that
		\begin{equation}\label{B.Poisson.Kernel.subordination.formula}
		P_{t}^{\lambda}(x,y)= \frac{t}{\sqrt{4\pi}} \int\limits_{0}^{\infty} e^{-\frac{t^{2}}{4u}} W_{t}^{\lambda}(x,y) \frac{du}{u^{\frac{3}{2}}}. 
		\end{equation}
Explicitly, the Poisson kernel has the following expression (see Section 6 of \cite{BHNV}), in terms of ordinary hypergeometric $_{2}F_{1}$ functions:
		\begin{align}\label{B.Poisson.Kernel.explicit}
		P_{t}^{\lambda}(x,y)= & 2\pi^{-{\frac{1}{2}}}{\frac{\Gamma(\lambda+1)}{\Gamma(\lambda+{\frac{1}{2}})}} {\frac{t}{(x^{2}+y^{2}+t^{2})^{\lambda+1}}} \times \\ \nonumber & \qquad \times {}_{2}F_{1}\left( {\frac{\lambda+1}{2}} ; {\frac{\lambda+2}{2}} ; {\frac{2\lambda+1}{2}} ; \left( {\frac{2xy}{x^{2}+y^{2}+t^{2}}} \right)^{2} \right).
		\end{align}

\par For a function $f$, its Bessel Poisson integral is
\begin{equation}\label{B.diffusion.integral.P}
P_{t}^{\lambda}f(x)=\int\limits_{0}^{\infty}P_{t}^{\lambda}(x,y)f(y)d\mu(y),
\end{equation}
for $t,x>0$; and its local maximal operator is defined for $a>0$ by
\begin{equation}\label{B.local.maximal.operator.P}
P^{\lambda,\ast}_{a}f(x)= \sup\limits_{0<t<a}|P_{t}^{\lambda}f(x)|.
\end{equation}

\section{Conditions on data $f$ for almost everywhere convergence for the Bessel heat equation}\label{heat}

\par In this section we focus on the initial data problem \ref{B.heat.eqn}. In order to do computations we will use the following expressions for the Bessel heat kernel: from \ref{B.Heat.Kernel.explicit} and properties \ref{Modified.B.for.small.z} and \ref{Modified.B.for.big.z} we can write
		\begin{equation}\label{W.split.xy<2t}
		x^{2\lambda}W_{t}^{\lambda}(x,y)y^{2\lambda}\chi_{\left\{x,y>0:xy\le 2t\right\}}(x,y) \sim  {\frac{(xy)^{2\lambda}}{t^{\lambda+{\frac{1}{2}}}}} e^{-{\frac{(x^{2}+y^{2})}{4t}}} \chi_{\left\{x,y>0:xy\le 2t\right\}}(x,y),
	\end{equation}
	\begin{equation}\label{W.split.xy>2t}
		x^{2\lambda}W_{t}^{\lambda}(x,y)y^{2\lambda}\chi_{\left\{x,y>0:xy>2t\right\}}(x,y) \sim \frac{(xy)^{\lambda}} {(2t)^{\frac{1}{2}}} e^{-{\frac{(x-y)^{2}} {4t}}} \chi_{\left\{x,y>0:xy>2t\right\}}(x,y).
	\end{equation}
for $x>0$ and $y>0$.

\par The proof of Theorem \ref{thm:Main} in the heat context follows from the next three propositions. Let us begin by stating necessary and sufficient conditions on the initial data $f$  for the existence of $e^{-t\Delta_{\lambda}}$.

\begin{prop}\label{prop:B.Equivalent.Conditions}
Let $T>0$ fixed (possibly $T=\infty$) and $\lambda >-\frac{1}{2}$. For a measurable function $f:\RR^{+}\to \RR$, the following statements are equivalent
	\begin{itemize}
			\item[(i)] $\int\limits_{0}^{\infty} W_{t}^{\lambda}(x,y)|f(y)|d\mu_{\lambda}(y)<\infty$, for all $t\in(0,T)$ and $x>0$;
			\item[(ii)] $\int\limits_{0}^{\infty} W_{t}^{\lambda}(x_{t},y)|f(y)|d\mu_{\lambda}(y)<\infty$, for all $t\in (0,T)$ and some $x_{t}>0$;
			\item[(iii)] $\int\limits_{0}^{\infty} \phi_{t}^{\lambda}(y) |f(y)|dy<\infty$, for all $t\in(0,T)$, where
\begin{equation*}
\phi_{t}^{\lambda}(y)= y^{\lambda}\left(\frac{y}{y+1}\right)^{\lambda} e^{-{\frac{y^{2}}{4t}}}.
\end{equation*}
\end{itemize}
\end{prop}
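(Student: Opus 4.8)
The plan is to prove the three implications $(i)\Rightarrow(ii)\Rightarrow(iii)\Rightarrow(i)$. The implication $(i)\Rightarrow(ii)$ is immediate: if the integral is finite for all $x>0$, then in particular it is finite for some choice $x_t>0$ for each $t$. So the real content is $(ii)\Rightarrow(iii)$ and $(iii)\Rightarrow(i)$.

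For $(ii)\Rightarrow(iii)$, I would fix $t\in(0,T)$ and the given point $x_t>0$, and use the two-sided estimates \eqref{W.split.xy<2t} and \eqref{W.split.xy>2t} for $x^{2\lambda}W_t^\lambda(x_t,y)y^{2\lambda}$. Since $x_t$ is fixed, all factors depending only on $x_t$ (such as $x_t^{2\lambda}$, $x_t^\lambda$, powers of $t$, and $e^{-x_t^2/4t}$) are positive constants and can be absorbed. The key observation is that, for $y$ bounded (say $y\le 2t/x_t$, which is the region $x_ty\le 2t$), the kernel behaves like a constant times $y^{2\lambda}e^{-y^2/4t}$, which near $y=0$ is comparable to $y^{2\lambda}$, matching $\phi_t^\lambda(y)=y^\lambda(y/(y+1))^\lambda e^{-y^2/4t}\sim y^{2\lambda}$ as $y\to0$. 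For $y$ large (the region $x_ty>2t$), the kernel behaves like a constant times $y^\lambda e^{-(x_t-y)^2/4t}$; since $e^{-(x_t-y)^2/4t}=e^{x_ty/2t}e^{-x_t^2/4t}e^{-y^2/4t}$ and $e^{x_ty/2t}$ grows only exponentially in $y$ while for large $y$ we have $y^\lambda\sim y^\lambda (y/(y+1))^\lambda$, we get that $x_t^{2\lambda}W_t^\lambda(x_t,y)y^{2\lambda}$ is comparable to $\phi_t^\lambda(y)$ up to multiplicative constants and a harmless bounded factor $e^{x_ty/2t}$ that can be controlled by slightly shrinking $t$ — or, more cleanly, one notes that finiteness of $\int W_t^\lambda(x_t,y)|f(y)|d\mu_\lambda(y)$ for all $t\in(0,T)$ gives room to trade the factor $e^{x_ty/2t}$ against a smaller $t'<t$ inside $e^{-y^2/4t'}$. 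The upshot is $\int_0^\infty \phi_t^\lambda(y)|f(y)|\,dy<\infty$ for all $t\in(0,T)$, possibly after this $t$-shift, which is fine since (iii) is a statement for all $t\in(0,T)$.

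For $(iii)\Rightarrow(i)$, I would fix $t\in(0,T)$ and $x>0$ and again split the integral $\int_0^\infty W_t^\lambda(x,y)|f(y)|d\mu_\lambda(y)$ over the regions $xy\le 2t$ and $xy>2t$ using the same estimates. On $\{xy\le 2t\}$ the integrand is bounded by a constant (depending on $t,x$) times $y^{2\lambda}e^{-y^2/4t}|f(y)|\lesssim \phi_t^\lambda(y)|f(y)|$, integrable by (iii). On $\{xy>2t\}$ the integrand is bounded by a constant times $y^\lambda e^{-(x-y)^2/4t}|f(y)|$; writing $e^{-(x-y)^2/4t}=c_{x,t}e^{xy/2t}e^{-y^2/4t}$ and using $y^\lambda\sim y^\lambda(y/(y+1))^\lambda$ for $y$ large, this is $\lesssim_{x,t} e^{xy/2t}\phi_t^\lambda(y)|f(y)|$. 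To absorb the exponential $e^{xy/2t}$ I would exploit that (iii) holds for \emph{all} $t'\in(0,T)$: pick $t'<t$ with $1/(4t')-1/(2t)\cdot(x/y)\ge 1/(4t)$-type bound — more simply, for large $y$, $e^{xy/2t}e^{-y^2/4t}\le e^{-y^2/8t}$ once $y>4x$, so the tail is bounded by $\phi_{2t}^\lambda(y)|f(y)|$ (comparable to $y^\lambda(y/(y+1))^\lambda e^{-y^2/8t}|f(y)|$), integrable by (iii) applied with $2t$ in place of $t$ (legitimate as long as $2t<T$; if $T$ is finite and $2t\ge T$ one uses $(1+\epsilon)t$ for small $\epsilon$ and keeps the quadratic gain, which still dominates the linear exponential for large $y$). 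Hence (i) holds.

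The main obstacle, and the one subtlety worth presenting carefully, is the bookkeeping of the exponential factor $e^{xy/2t}$ coming from completing the square in $e^{-(x-y)^2/4t}$: one must verify that the Gaussian decay $e^{-y^2/4t}$ in $\phi_t^\lambda$ always wins against this linear-exponential growth for large $y$, which forces the mild adjustment of the time parameter (replacing $t$ by some $t'$ strictly between the relevant value and $T$). Since both (i) and (iii) are quantified over all $t$ in the open interval $(0,T)$, there is always enough room to do this, so the equivalence is clean; I would state this time-shift once and reuse it in both directions. The regime $xy\le 2t$, by contrast, is elementary since there the kernel is essentially the model Gaussian weight $y^{2\lambda}e^{-y^2/4t}$ and matches $\phi_t^\lambda$ directly through $\phi_t^\lambda(y)\sim y^{2\lambda}$ near the origin and $\phi_t^\lambda(y)\le y^{2\lambda}e^{-y^2/4t}$ everywhere.
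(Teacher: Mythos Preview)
Your strategy matches the paper's: split the kernel via \eqref{W.split.xy<2t}--\eqref{W.split.xy>2t} and compare each piece to $\phi_t^\lambda$, using a time adjustment to absorb the cross term $e^{xy/2t}$ coming from $e^{-(x-y)^2/4t}=e^{-x^2/4t}e^{xy/2t}e^{-y^2/4t}$. Two refinements the paper makes that you should incorporate. First, in $(ii)\Rightarrow(iii)$ no time-shift is needed at all: there you want a \emph{lower} bound on the kernel, and on $\{x_ty>2t\}$ the factor $e^{x_ty/(2t)}\ge e>1$ works in your favor, so $y^\lambda e^{-(x_t-y)^2/4t}\ge c(x_t,t)\,y^\lambda e^{-y^2/4t}$ directly; your talk of ``controlling'' this factor there has the inequality backwards. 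The time-shift is only required in $(iii)\Rightarrow(i)$, where the paper packages it as the elementary estimate $e^{-(x-y)^2/4t}\le c(x,t,M)\,e^{-(y^2/4t)((M-1)/M)^2}$ and chooses $M$ large so that $t_0:=t\,(M/(M-1))^2<T$, which is exactly your ``use $(1+\epsilon)t$'' argument. Second, the paper explicitly separates the cases $\lambda\ge 0$ and $-\tfrac12<\lambda<0$ when comparing $y^{2\lambda}$ and $y^\lambda$ to $\phi_t^\lambda(y)=y^{2\lambda}(y+1)^{-\lambda}e^{-y^2/4t}$ on the two regions, since the factor $(y/(y+1))^\lambda$ reverses monotonicity with the sign of $\lambda$; your sketch implicitly assumes $\lambda\ge 0$ and should be supplemented by the (equally elementary) negative-$\lambda$ bounds.
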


\begin{proof}

\par That (i) implies (ii) is trivial. Let us prove that (ii) implies (iii). Pick $t_{0}<T$ and $x_{0}>0$ such that (ii) holds: $$\int\limits_{0}^{\infty} x_0^{2\lambda}W_{t_{0}}^{\lambda}(x_{0},y)f(y)y^{2\lambda}dy<\infty.$$
Then,	\begin{align*}
 \infty > &  \int\limits_{0}^{\infty} x_0^{2\lambda}W_{t_{0}}^{\lambda}(x_{0},y)f(y)y^{2\lambda}dy \sim
		\int\limits_{\left\{y>0:x_{0}y\le 2t_{0}\right\}}  {\frac{(x_0 y)^{2\lambda}}{t_{0}^{\lambda+{\frac{1}{2}}}}} e^{-{\frac{(x_{0}^{2}+y^{2})}{4t_{0}}}} f(y)dy  \\
		 & +\int\limits_{\left\{y>0:x_{0}y>2t_{0}\right\}}  {\frac{(x_{0}y)^{\lambda}}{(2t_{0})^{\frac{1}{2}}}} e^{-{\frac{(x_{0}-y)^{2}}{4t_{0}}}} f(y)dy \\
		 = &   e^{-{\frac{x_{0}^{2}}{4t_{0}}}}  {\frac{(x_0)^{2\lambda}}{t_{0}^{\lambda+{\frac{1}{2}}}}} \int\limits_{\left\{y>0:x_{0}y\le 2t_{0}\right\}}  y^{2\lambda} e^{-{\frac{y^{2}}{4t_{0}}}} f(y)dy  \\
		 & \qquad + {\frac{x_{0}^{\lambda}}{(2t_{0})^{\frac{1}{2}}}} \int\limits_{\left\{y>0:x_{0}y>2t_{0}\right\}} y^{\lambda}  e^{-{\frac{(x_{0}-y)^{2}}{4t_{0}}}} f(y)dy  = {I}_{1}+{I}_{2}.
	\end{align*}
For $\lambda\geq0$ we deduce that
\begin{align*}
\infty >& {I}_{1} \ge c(x_{0},t_{0},\lambda)\int\limits_{\left\{y>0:x_{0}y\le 2t_{0}\right\}}\left(\frac{1+y}{1+y}\right)^{\lambda} y^{2\lambda} e^{-{\frac{y^{2}}{4t_{0}}}} f(y)dy \\
\ge & c(x_{0},t_{0},\lambda) \int\limits_{\left\{y>0:x_{0}y\le 2t_{0}\right\}}\left(\frac{y}{1+y}\right)^{\lambda} y^{\lambda} e^{-{\frac{y^{2}}{4t_{0}}}} f(y)dy.
\end{align*}On the other hand, clearly
\begin{align*}
\infty > {I}_{2} \ge & c(x_{0},t_{0},\lambda) \int\limits_{\left\{y>0:x_{0}y>2t_{0}\right\}} y^{\lambda}  e^{-{\frac{(x_{0}-y)^{2}}{4t_{0}}}} f(y) dy \\
\geq & c(x_{0},t_{0},\lambda) \int\limits_{\left\{y>0:x_{0}y>2t_{0}\right\}} \left(\frac{y}{y+1}\right)^{\lambda}y^{\lambda}  e^{-{\frac{y^{2}}{4t_{0}}}} f(y) dy,\\
\end{align*}
since $e^{\frac{x_0y}{2t_0}}>e.$
For $ -\frac{1}{2}<\lambda<0$ we get
\begin{align*}
\infty > {I}_{1} \ge & c(x_{0},t_{0},\lambda) \int\limits_{\left\{y>0:x_{0}y\le 2t_{0}\right\}}\left(\frac{1}{y}\right)^{-\lambda} y^{\lambda} e^{-{\frac{y^{2}}{4t_{0}}}} f(y)dy\\
 = & c(x_{0},t_{0},\lambda) \int\limits_{\left\{y>0:x_{0}y\le 2t_{0}\right\}} \left(\frac{1+\frac{2t_0}{x_0}}{1+\frac{2t_0}{x_0}}\right)^{-\lambda}\left(\frac{1}{y}\right)^{-\lambda} y^{\lambda} e^{-{\frac{y^{2}}{4t_{0}}}} f(y)dy\\
\ge & c(x_{0},t_{0},\lambda) \int\limits_{\left\{y>0:x_{0}y\le 2t_{0}\right\}} \left(\frac{1+y}{1+\frac{2t_0}{x_0}}\right)^{-\lambda}\left(\frac{1}{y}\right)^{-\lambda} y^{\lambda} e^{-{\frac{y^{2}}{4t_{0}}}} f(y)dy \\
 \ge & c(x_{0},t_{0},\lambda) \int\limits_{\left\{y>0:x_{0}y\le 2t_{0}\right\}}\left(\frac{y}{1+y}\right)^{\lambda} y^{\lambda} e^{-{\frac{y^{2}}{4t_{0}}}} f(y)dy.
\end{align*}
Also
\begin{align*}
\infty > {I}_{2} \ge & c(x_{0},t_{0},\lambda) \int\limits_{\left\{y>0:x_{0}y>2t_{0}\right\}} y^{\lambda}  e^{-{\frac{y^{2}}{4t_{0}}}}f(y) dy \\
\ge & c(x_{0},t_{0},\lambda) \int\limits_{\left\{y>0:x_{0}y>2t_{0}\right\}} \left(\frac{x_0}{2t_0}+1\right)^{-\lambda} y^{\lambda}  e^{-{\frac{y^{2}}{4t_{0}}}} f(y) dy\\
\ge & c(x_{0},t_{0},\lambda) \int\limits_{\left\{y>0:x_{0}y>2t_{0}\right\}} \left(\frac{1}{y}+1\right)^{-\lambda} y^{\lambda}  e^{-{\frac{y^{2}}{4t_{0}}}} f(y) dy \\
 \ge & c(x_{0},t_{0},\lambda) \int\limits_{\left\{y>0:x_{0}y>2t_{0}\right\}}\left(\frac{y}{1+y}\right)^{\lambda} y^{\lambda}  e^{-{\frac{y^{2}}{4t_{0}}}} f(y) dy.
 \end{align*}
 Then we cover all cases and thus (iii) holds for all $t>0$.

\par Finally, let us show that (iii) implies (i). Pick $0<t<T$ and $x>0$. Splitting as before
\begin{align*}
&  \int\limits_{0}^{\infty} x^{2\lambda}W_{t}^{\lambda}(x,y)f(y)y^{2\lambda}dy  \sim \\
 & \qquad \qquad \sim   c(x,t,\lambda)\int\limits_{\left\{y>0:xy<2t\right\}} y^{2\lambda} e^{-{\frac{y^{2}}{4t}}}f(y)dy \quad   \\
 & \qquad \qquad \quad + c(x,t,\lambda)\int\limits_{\left\{y>0:xy>2t\right\}}  y^{\lambda} e^{-{\frac{(x-y)^{2}}{4t}}}   f(y)dy \quad = \quad {I}_{1}+{I}_{2}.
\end{align*}

\par Let us see first that $I_{1}$ is finite. Indeed, if $\lambda\geq 0$ it is inmediate that
\begin{align*}
{I}_{1} =&  c(x,t,\lambda) \int\limits_{\left\{y>0:xy\le 2t \right\}}\left(\frac{1+y}{1+y}\right)^{\lambda} y^{2\lambda} e^{-{{\frac{y^{2}}{4t}}}} f(y)dy  \\
\leq &   c(x,t,\lambda) \int\limits_{\left\{y>0:xy\le 2t\right\}}\left(\frac{1+\frac{2t}{x}}{1+y}\right)^{\lambda} y^{2\lambda} e^{-{\frac{y^{2}}{4t}}} f(y)dy\\
\leq & c(x,t,\lambda)\int\limits_{\left\{y>0:xy\le 2t\right\}}\left(\frac{y}{1+y}\right)^{\lambda} y^{\lambda} e^{-{\frac{y^{2}}{4t}}} f(y)dy.
\end{align*}
Also,  if $-\frac{1}{2}<\lambda<0$, we have
\begin{align*}
{I}_{1} \leq &  c(x,t,\lambda) \int\limits_{\left\{y>0:xy\le 2t\right\}}(1+y)^{-\lambda}\left(\frac{1}{y}\right)^{-\lambda} y^{\lambda} e^{\frac{-y^{2}}{4t}}  f(y)dy \\
= &   c(x,t,\lambda)\int\limits_{\left\{y>0:xy\le 2t\right\}}\left(\frac{y}{1+y}\right)^{\lambda} y^{\lambda} e^{\frac{-y^{2}}{4t}} f(y)dy.
\end{align*}

\par As for the finitude of $I_{2}$, let us recall estimate (3.4) from \cite{GHSTV}: for $x,y\in\RR$, $t>0$, $M>1$ we have that
	\begin{equation}
	\label{desigualdad.exp.GHSTV}
		{\frac{1}{c}}e^{-{\frac{|y|^{2}}{4t}}\left( {\frac{M+1}{M}} \right)^{2}}\le e^{-{\frac{|x-y|^{2}}{4t}}} \le ce^{-{\frac{|y|^{2}}{4t}}\left( {\frac{M-1}{M}} \right)^{2}},
	\end{equation}
where $c=c(x,t,M)$

\par Now if $\lambda\geq 0$, then
\begin{align*}
I_{2} = &  c(x,t,\lambda)\int\limits_{\left\{y>0:xy>2t\right\}}  y^{\lambda} e^{-{\frac{(x-y)^{2}}{4t}}}f(y)dy\\
\leq & c(x,t,\lambda, M)\int\limits_{\left\{y>0:xy>2t\right\}} \left(1+\frac{x}{2t}\right)^\lambda \left(1+\frac{x}{2t}\right)^{-\lambda} y^{\lambda}e^{-{{\frac{y^{2}}{4t}}}\left( {\frac{M-1}{M}} \right)^{2}}f(y)dy\\
\leq & c(x,t,\lambda, M)\int\limits_{\left\{y>0:xy>2t\right\}}\left(1+\frac{1}{y}\right)^{-\lambda} y^{\lambda}e^{-{{\frac{y^{2}}{4t}}}\left( {\frac{M-1}{M}} \right)^{2}}f(y)dy.
\end{align*}
Choosing $M$ big enough such that $t<t_{0}:=t\left(\frac{M}{M-1}\right)^2 < T$ and using (iii) we get that $I_2$ is finite. For $-\frac{1}{2}<\lambda<0$ we proceed  as above, but this time $I_2$ is bounded as follows
\begin{align*}
I_{2} \leq &c(x,t,\lambda, M)\int\limits_{\left\{y>0:xy>2t\right\}} \left(\frac{1+y}{1+y}\right)^{-\lambda} y^{\lambda}e^{-{\frac{|y|^{2}}{4t_{0}}}}f(y)dy\\
< &c(x,t,\lambda, M)\int\limits_{\left\{y>0:xy>2t\right\}} \left(1+\frac{1}{y}\right)^{-\lambda} y^{\lambda}e^{-{\frac{|y|^{2}}{4t_{0}}}}f(y)dy <\infty.
\end{align*}

\end{proof}

\begin{prop}\label{prop:B.solution.is.C.infinity}
Let  $f$ be a function satisfying the conditions in Proposition \ref{prop:B.Equivalent.Conditions}, and $\lambda\geq 0$, then
	\begin{equation}
	\label{B.solution.is.C.infinity}
		u(t,x)=\int\limits_{0}^{\infty}W_{t}^{\lambda}(x,y)f(y)d\mu_{\lambda}(y) \in C^{\infty}((0,T)\times\RR^{+}).
	\end{equation}
\end{prop}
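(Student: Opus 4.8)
The plan is to prove smoothness by differentiating under the integral sign, verified on each compact rectangle $K=[t_{1},t_{2}]\times[x_{1},x_{2}]$ with $0<t_{1}<t_{2}<T$ and $0<x_{1}<x_{2}<\infty$; since such rectangles exhaust $(0,T)\times\RR^{+}$, it suffices to show that $u$ has continuous partial derivatives of every order in the interior of each $K$. The function $W_{t}^{\lambda}(x,y)$ is, by \eqref{B.Heat.Kernel.explicit}, equal to $e^{-(x^{2}+y^{2})/4t}$ times real-analytic functions of $(t,x,y)$ on $(0,\infty)^{3}$ (note $\mathcal{I}_{\lambda-\frac12}$ is real-analytic on $(0,\infty)$), hence is smooth there, so the integrand is smooth in $(t,x)$ for every fixed $y$. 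By the Leibniz rule the whole matter then reduces to the following claim: for every pair of nonnegative integers $(j,k)$ there are a constant $C_{K}$ and a nonnegative function $g_{K}$, both independent of $(t,x)\in K$, with $\int_{0}^{\infty}g_{K}(y)\,|f(y)|\,dy<\infty$, such that
\[
\Bigl|\partial_{t}^{\,j}\partial_{x}^{\,k}\bigl(x^{2\lambda}W_{t}^{\lambda}(x,y)\bigr)\Bigr|\,y^{2\lambda}\le C_{K}\,g_{K}(y)\qquad\text{for all }(t,x)\in K,\ y>0 .
\]

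To construct $g_{K}$ I would differentiate the explicit kernel directly. By the Leibniz and chain rules, $\partial_{t}^{\,j}\partial_{x}^{\,k}\bigl(x^{2\lambda}W_{t}^{\lambda}(x,y)\bigr)$ is a finite linear combination of terms $t^{-a}x^{b}y^{c}\,e^{-(x^{2}+y^{2})/4t}\,\mathcal{I}^{(m)}_{\lambda-\frac12}\!\bigl(\tfrac{xy}{2t}\bigr)$ with $a,m\ge0$ and $b,c$ real, all depending only on $j,k,\lambda$. The recurrences $\mathcal{I}_{\nu}'(z)=\mathcal{I}_{\nu+1}(z)+\tfrac{\nu}{z}\mathcal{I}_{\nu}(z)=\mathcal{I}_{\nu-1}(z)-\tfrac{\nu}{z}\mathcal{I}_{\nu}(z)$ together with \eqref{Modified.B.for.small.z}--\eqref{Modified.B.for.big.z} show that each derivative of $\mathcal{I}_{\nu}$ satisfies bounds of the same type, $|\mathcal{I}^{(m)}_{\nu}(z)|\le Cz^{\nu-m}$ for $0<z\le1$ and $|\mathcal{I}^{(m)}_{\nu}(z)|\le Ce^{z}z^{-1/2}$ for $z\ge1$. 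Inserting these, just as in the passage from \eqref{B.Heat.Kernel.explicit} to \eqref{W.split.xy<2t}--\eqref{W.split.xy>2t}, one obtains that on $\{xy\le2t\}$ the left-hand side above is bounded by a finite sum of terms $C\,t^{-a'}x^{b'}y^{c'}e^{-(x^{2}+y^{2})/4t}$ with $c'\ge2\lambda$, and that on $\{xy>2t\}$ it is bounded by a finite sum of terms $C\,t^{-a''}x^{b''}y^{c''}e^{-(x-y)^{2}/4t}$ with $c''\ge\lambda$. Since on $K$ the variables $t$ and $x$ range over fixed compact subintervals of $(0,\infty)$, every power of $t^{-1}$ and of $x$ occurring there (including negative powers, as $x_{1}>0$) is bounded by a constant $C_{K}$.

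It then remains to dominate what is left by an $\int\cdot|f|$-integrable function. For any $t''$ with $t_{2}<t''<T$ one has, on all of $\RR^{+}$, $y^{c'}e^{-(x^{2}+y^{2})/4t}\le y^{c'}e^{-y^{2}/4t_{2}}\le C\,\phi^{\lambda}_{t''}(y)$, because $\phi^{\lambda}_{t''}(y)=y^{2\lambda}(y+1)^{-\lambda}e^{-y^{2}/4t''}$ and $y^{c'-2\lambda}(y+1)^{\lambda}e^{-y^{2}(\frac1{4t_{2}}-\frac1{4t''})}$ is bounded on $\RR^{+}$ (here $c'-2\lambda\ge0$ and the exponential gap beats the polynomial). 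On $\{xy>2t\}\cap K$ one has $y\ge2t_{1}/x_{2}=:y_{0}>0$; picking $M>1$ with $t_{2}(M/(M-1))^{2}<T$ and invoking \eqref{desigualdad.exp.GHSTV} gives $e^{-(x-y)^{2}/4t}\le C_{K}\,e^{-y^{2}/4t'}$ for a fixed $t'\in(t_{2},T)$, uniformly on $K$, and then $y^{c''}e^{-y^{2}/4t'}\le C\,\phi^{\lambda}_{t''}(y)$ on $\{y\ge y_{0}\}$ for any $t''\in(t',T)$, by the same comparison. Choosing one $t''\in(0,T)$ larger than both $t'$ and $t_{2}$ and depending only on $K,j,k,\lambda$, we may take $g_{K}=C_{K}\,\phi^{\lambda}_{t''}$, and Proposition \ref{prop:B.Equivalent.Conditions}(iii) gives the required $\int_{0}^{\infty}g_{K}(y)\,|f(y)|\,dy<\infty$; the Leibniz rule then yields all partial derivatives of $u$ on the interior of $K$, continuous there, and letting $K$ vary gives $u\in C^{\infty}((0,T)\times\RR^{+})$. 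The hard part is the second paragraph---the bookkeeping of $\partial_{t}^{\,j}\partial_{x}^{\,k}W_{t}^{\lambda}$ through the Bessel recurrences and verifying that in both regions the outcome has the same shape as \eqref{W.split.xy<2t}--\eqref{W.split.xy>2t} up to powers of $x$ and $t^{-1}$ harmless on a compact rectangle; once that estimate is secured, the reduction to $\phi^{\lambda}_{t''}$ and the appeal to condition (iii) are routine.
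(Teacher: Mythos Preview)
Your proof is correct and takes a genuinely different route from the paper's. The paper exploits two structural shortcuts you bypass: first, since the kernel itself satisfies the Bessel heat equation $\partial_{t}W_{t}^{\lambda}=-\Delta_{\lambda}W_{t}^{\lambda}$, the paper argues that it suffices to control only the $t$-derivatives $\partial_{t}^{\alpha}W_{t}^{\lambda}$; second, instead of tracking $\mathcal{I}_{\nu}^{(m)}$ through the general recurrences, the paper uses the single identity $\frac{d}{dz}\bigl(z^{-\nu}\mathcal{I}_{\nu}(z)\bigr)=z^{-\nu}\mathcal{I}_{\nu+1}(z)$ to write $\partial_{t}W_{t}^{\lambda}$ explicitly as a linear combination of $W_{t}^{\lambda}$ and $W_{t}^{\lambda+1}$ with polynomial coefficients in $t^{-1}$, $x$, $y$. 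This reduces everything to checking that $y^{\alpha}f$ (for $\alpha\ge 1$) again satisfies the hypotheses of Proposition~\ref{prop:B.Equivalent.Conditions}, which is immediate since multiplying by a power of $y$ is absorbed by the Gaussian in $\phi_{t}^{\lambda}$.

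Your approach---uniform domination of every mixed derivative on compact rectangles via the asymptotics of $\mathcal{I}_{\nu}^{(m)}$ in both regimes, and then comparison with $\phi_{t''}^{\lambda}$ for a single $t''\in(t_{2},T)$---is more hands-on but also more self-contained: it does not rely on the heat-equation reduction (which in the paper is stated rather briskly and still leaves one $x$-derivative to be treated), and it makes the dominated-convergence step completely explicit. The paper's argument is shorter and more structural; yours is more robust and would transfer with fewer changes to kernels that do not themselves solve a convenient PDE.
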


\begin{proof}
\par We need to prove $$\int\limits_{0}^{\infty} |\partial_{t}^{\alpha}\partial_{x}^{\beta}W_{t}^{\lambda}(x,y)| f(y)d\mu_{\lambda}(y)<\infty$$ for all $\alpha,\beta\ge0$. Since the kernel $W_{t}^{\lambda}(x,y)$ satisfies the Bessel heat equation, we may only check the finitude of the integral for the derivatives on the $t$ variable:
\begin{equation}
\label{1}
\int\limits_{0}^{\infty} |\partial_{t}^{\alpha}W_{t}^{\lambda}(x,y)| f(y)d\mu_{\lambda}(y)<\infty.
\end{equation}
 Thus we need to compute ${\frac{\partial}{\partial t}}W_{t}^{\lambda}(x,y)$, and in order to do this let us recall that (see for example \cite{L})
\begin{equation}
\label{deriv.B}
{\frac{d}{dz}}\left(z^{-\nu}\mathcal{I}_{\nu}(z)\right)=z^{-\nu}\mathcal{I}_{\nu+1}(z).
\end{equation}
Keeping this formula in mind we rewrite the kernel in (\ref{B.Heat.Kernel.explicit}) as follows:
\begin{equation}
\label{B.Heat.Kernel.v2}
W_{t}^{\lambda}(x,y) =  {\frac{1}{(2t)^{\lambda+{\frac{1}{2}}}}} e^{-{\frac{x^{2}+y^{2}}{4t}}} \left[ \left( {\frac{xy}{2t}} \right)^{-(\lambda-{\frac{1}{2}})} \mathcal{I}_{\lambda-{\frac{1}{2}}} \left( {\frac{xy}{2t}} \right) \right].
\end{equation}
Then we have that
\begin{align*}
{\frac{\partial}{\partial t}}W_{t}^{\lambda}(x,y) = & \left[ -2{\frac{\lambda+{\frac{1}{2}}}{2t}} + \left( {\frac{x^{2}+y^{2}}{4t^{2}}} \right) \right] W_{t}^{\lambda}(x,y) + \left( -{\frac{(xy)^{2}}{2t^{2}}} \right) W_{t}^{\lambda+1}(x,y),
\end{align*}
from where
\begin{align*}
\int\limits_{0}^{\infty} {\frac{\partial}{\partial t}} W_{t}^{\lambda}(x,y)f(y)d\mu(y)= & \int\limits_{0}^{\infty} \left( -2{\frac{\lambda+{\frac{1}{2}}}{2t}} +  {\frac{x^{2}+y^{2}}{4t^{2}}} \right)  W_{t}^{\lambda}(x,y) f(y)d\mu(y)  \\
& + \int\limits_{0}^{\infty} -{\frac{(xy)^{2}}{2t^{2}}} W_{t}^{\lambda+1}(x,y)f(y)d\mu(y) ={I}_{1}+{I}_{2}.
\end{align*}

\par Let us first observe the integrand in $I_{2}$:
\begin{align*}
-\frac{(xy)^{2}}{2t^{2}} W_{t}^{\lambda +1}(x,y)y^{2\lambda} = -\frac{1}{2t^{2}x^{2\lambda}} W_{t}^{\lambda +1}(x,y)(xy)^{2(\lambda +1)}.
\end{align*}
Thus if we split ${I}_{2}$ as usual and apply estimates \ref{W.split.xy<2t} and \ref{W.split.xy>2t}, we obtain
\begin{align*}
{I}_{2} \sim &  \int\limits_{\left\{y>0:xy\le 2t \right\}} -{\frac{(xy)^{2}}{2t^{2}}} {\frac{1}{2t}} W_{t}^{\lambda}(x,y)  f(y)d\mu(y)  \\
 & + \int\limits_{\left\{y>0:xy>2t \right\}} -{\frac{xy}{2t^{2}}}  W_{t}^{\lambda}(x,y)  f(y)d\mu(y).
\end{align*}
Now, from this expression and ${I}_{1}$ we observe that (\ref{1}) follows if $y^{\alpha}f(y)$ for $\alpha\geq1$ satisfies the conditions of proposition \ref{prop:B.Equivalent.Conditions}, which clearly does.
\end{proof}

\begin{prop}\label{prop:B.solution.ae.limit}
If $f$ satisfies the conditions in Proposition \ref{prop:B.Equivalent.Conditions}, then
	\begin{equation}
	\label{B.solution.ae.limit}
		\lim\limits_{t\to 0^{+}} e^{-t\Delta_{\lambda}}f(x)=f(x), \qquad \mbox{ a.e. } x>0.
	\end{equation}
\end{prop}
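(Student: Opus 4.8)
The plan is to establish the pointwise convergence first for a dense, well-behaved subclass of initial data and then transfer it to the general class via a local maximal function estimate. The natural dense subclass is $C_c^\infty(\RR^+)$ (or continuous compactly supported functions), for which the convergence $\lim_{t\to 0^+} W_t^\lambda g(x)=g(x)$ should hold at every $x>0$; one would check this directly from the explicit kernel \ref{B.Heat.Kernel.explicit} together with the asymptotics \ref{Modified.B.for.small.z}--\ref{Modified.B.for.big.z}, essentially because $W_t^\lambda(x,\cdot)\,d\mu_\lambda$ is an approximate identity at $x$: on the region $xy>2t$ the estimate \ref{W.split.xy>2t} shows the mass concentrates near $y=x$ like a Gaussian of width $\sqrt t$, while the region $xy\le 2t$ has vanishing $\mu_\lambda$-mass as $t\to 0^+$. (Alternatively one invokes that $W_t^\lambda$ is a conservative symmetric diffusion semigroup, so $W_t^\lambda 1=1$, and uses continuity of $g$ at $x$.)

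Next I would fix $x_0>0$ and reduce everything to a neighbourhood of $x_0$. Writing $f=f\chi_{I}+f\chi_{I^c}$ where $I=(x_0/2,2x_0)$ say, the tail piece $f\chi_{I^c}$ contributes, for $x$ near $x_0$ and $t$ small, an integral over $y$ bounded away from $x$; using \ref{W.split.xy>2t} and the Gaussian decay in $(x-y)^2/4t$ one sees $W_t^\lambda(f\chi_{I^c})(x)\to 0$ as $t\to 0^+$ uniformly for $x$ in a smaller neighbourhood of $x_0$ (here one uses condition (iii) of Proposition \ref{prop:B.Equivalent.Conditions} to control $f$ at infinity and near $0$, exactly as in the finiteness proof of $I_2$ there). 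Thus it suffices to handle $f\chi_I$, which lies in $L^1(I,d\mu_\lambda)\subset L^1(\RR^+,\phi_t^\lambda\,dy)$.

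For the compactly supported $L^1$ piece, the standard route is: approximate $f\chi_I$ in $L^1(d\mu_\lambda)$ by $g\in C_c^\infty(\RR^+)$, write
\begin{align*}
\limsup_{t\to 0^+}|W_t^\lambda(f\chi_I)(x)-f\chi_I(x)| \le{}& \limsup_{t\to 0^+}|W_t^\lambda(f\chi_I-g)(x)|\\
&+\limsup_{t\to 0^+}|W_t^\lambda g(x)-g(x)| + |g(x)-f\chi_I(x)|,
\end{align*}
the middle term vanishing by the dense case. The first and third terms are then controlled in measure by a weak-type $(1,1)$ bound for the local maximal operator $W^{\lambda,*}_a$ on $L^1(d\mu_\lambda)$ (restricted to a fixed bounded interval), which is where the real work lies. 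This is where I expect the \textbf{main obstacle}: one needs $W^{\lambda,*}_a$ to be of weak type $(1,1)$ with respect to $d\mu_\lambda$ on a bounded set — this follows because $(\RR^+,|\cdot|,d\mu_\lambda)$ is a space of homogeneous type and the kernel $W_t^\lambda(x,y)$ satisfies, on the region $xy>2t$, a standard Gaussian/heat-kernel bound dominated by the centered maximal function of Hardy–Littlewood type (via \ref{W.split.xy>2t} and \ref{desigualdad.exp.GHSTV}), while the contribution of $xy\le 2t$ is uniformly bounded by elementary estimates using \ref{W.split.xy<2t}. A cleaner alternative, if available in the literature (e.g. \cite{MS}, \cite{BHNV}), is simply to cite that the maximal operator of the Bessel heat semigroup is weak type $(1,1)$ with respect to $d\mu_\lambda$; then the argument above closes immediately. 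With the weak-type bound in hand, letting $\|f\chi_I-g\|_{L^1(d\mu_\lambda)}\to 0$ forces the exceptional set where $\limsup_{t\to0^+}|W_t^\lambda(f\chi_I)(x)-f\chi_I(x)|>\varepsilon$ to have $\mu_\lambda$-measure zero for every $\varepsilon>0$, hence a.e.\ convergence on $I$; since $x_0$ was arbitrary and $d\mu_\lambda$ is mutually absolutely continuous with Lebesgue measure on $\RR^+$, this gives \ref{B.solution.ae.limit}.
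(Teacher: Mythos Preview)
Your argument is essentially correct and follows the same overall architecture as the paper's proof: split $f$ into a compactly supported piece and a tail, show the tail contribution vanishes as $t\to 0^{+}$ using the kernel asymptotics \ref{W.split.xy<2t}--\ref{W.split.xy>2t} together with condition (iii) of Proposition \ref{prop:B.Equivalent.Conditions}, and then handle the compactly supported piece by general theory. The differences are in the details of each half.

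For the decomposition, the paper splits more coarsely: it fixes $n_{0}\in\NN$, takes $M>2n_{0}$ and writes $f=f\chi_{\{y\le M\}}+f\chi_{\{y>M\}}$, so the ``local'' part is supported in all of $(0,M]$ and the tail lives only at infinity. You instead localize around a single point $x_{0}$ with $I=(x_{0}/2,2x_{0})$, which forces you to deal with a second tail near $y=0$; this is doable (condition (iii) gives $f\chi_{(0,x_{0}/2)}\in L^{1}(d\mu_{\lambda})$, and for $x$ near $x_{0}$ the Gaussian factor $e^{-x^{2}/4t}$ in \ref{W.split.xy<2t} kills the $t^{-\lambda-1/2}$), but the paper's choice avoids that case entirely.

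For the compactly supported piece, the paper simply invokes that $e^{-t\Delta_{\lambda}}$ is a symmetric diffusion semigroup in the sense of Stein \cite{S}, so a.e.\ convergence for $L^{1}(d\mu_{\lambda})$ data is a black-box consequence. You instead outline the machinery behind that black box: convergence on $C_{c}^{\infty}$, density, and a weak-type $(1,1)$ bound for $W^{\lambda,\ast}_{a}$ on $L^{1}(d\mu_{\lambda})$. Both are valid; the paper's is shorter, while yours is more self-contained and makes explicit where the maximal inequality enters.
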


\begin{proof}

\par Let us see first that for every $n_{0}\in\mathbb{NN}$, \ref{B.solution.ae.limit} holds for a.e. $0 <x\le n_{0}$. Let us split
$$f=f\chi_{|y|\le M} + f\chi_{|y|> M} =f_{1}+f_{2},$$ where $M$ is to be chosen later.

\par We have that, if $M>2n_{0}$,
\begin{align*}
e^{-t\Delta_{\lambda}}f_{2}(x) = & \int\limits_{|y|> M} W_{t}^{\lambda}(x,y)f(y)d\mu(y) \\
\sim & \int\limits_{|y|> M} \frac{y^{2\lambda}}{t^{\lambda+\frac{1}{2}}} e^{\frac{-(x^{2}+y^{2})}{4t}} \chi_{xy\le 2t}(y) f(y) dy   \\
& + \int\limits_{|y|> M} \frac{y^{\lambda}}{x^{\lambda}(2t)^{\frac{1}{2}}} e^{-{\frac{(x-y)^{2}}{4t}}} \chi_{xy> 2t}(y)  f(y)dy  \\
 \le &  c(\lambda) \int\limits_{|y|> M} \left(\frac{y^{2}}{4t}\right)^{\lambda} e^{-\frac{y^{2}}{4t}} e^{-\frac{x^{2}}{4t}} \frac{1}{t^{\frac{1}{2}}} \frac{y^{\lambda}}{(2n_{0})^{\lambda}} \chi_{xy\le 2t}(y) f(y) dy   \\
& + \int\limits_{|y|> M} \frac{y^{\lambda}}{x^{\lambda}(2t)^{\frac{1}{2}}}  \chi_{xy> 2t}(y) e^{\frac{-\left(\frac{y}{2}\right)^{2}}{4t}} f(y)dy, \\
\end{align*}
where we used the facts that $y>2n_{0}$ and $|x-y|>\frac{y}{2}$ for $y>2x$, respectively in each term of the sum. Since $\left(\frac{y^{2}}{ct} \right)^{\alpha} e^{-\frac{y^{2}}{4t}} \le c(\lambda)e^{-{\frac{\left(\frac{y}{2}\right)^{2}}{ct}}}$ for $c,\alpha>0$,
\begin{align*}
e^{-t\Delta_{\lambda}}f_{2}(x)  \le & c(n_{0},\lambda) \int\limits_{|y|>M} \frac{y^{\lambda}}{(2t)^{\frac{1}{2}}} e^{\frac{-y^{2}}{16t}} f(y) dy  \\
 \le & c(n_{0},\lambda) \int\limits_{|y|>M} y^{\lambda-1} e^{\frac{-\left(\frac{y}{2}\right)^{2}}{16t}} f(y) dy \le c(n_{0},\lambda) \int\limits_{|y|>M} y^{\lambda} e^{\frac{-y^{2}}{32t}} f(y) dy.
\end{align*}
Hence, choosing $t_0$ such that $t <t_{0}/8 < T$ and taking into account that  $\left( \frac{1+M}{M} \right)^{\lambda} \left( \frac{y}{y+1} \right)^{\lambda} \ge 1 $ for $|y|> M>1 $, we have
\begin{align*}
e^{-t\Delta_{\lambda}}f_{2}(x) \le & c(n_0,\lambda,M)  \int\limits_{|y|> M} \left( \frac{y}{y+1} \right)^{\lambda} y^{\lambda}e^{-\frac{y^{2}}{4t_{0}}}f(y)dy.
\end{align*}

Hence, from Proposition \ref{prop:B.Equivalent.Conditions}, given $\epsilon >0$ there exist $ M_0(\epsilon,n_0)>0 $ large enough  and $t_0>0 $ such that
\begin{align*}
e^{-t\Delta_{\lambda}}f_{2}(x) <\epsilon \qquad \mbox{for every\,$ M > M_0$, \,\, $t<t_0$ \,\,and \,\,$0<x \le n_0$}.
\end{align*}
\par On the other hand, since $e^{-t\Delta_{\lambda}}$ defines a symmetric diffusion semigroup as in \cite{S} and $f_{1}\in L^{1}(\RR^{+},d\mu)$, $$\lim\limits_{t\to 0^{+}}e^{-t\Delta_{\lambda}}f_{1}(x)=f_{1}(x)$$ for almost every $|x|\le n_{0}$, \ref{B.solution.ae.limit} holds.
\end{proof}

\par Thus Theorem \ref{thm:Main} follows. Next we give the proof for Corollary \ref{cor:B.weighted.class.characterization}.

\begin{proof}
\par If $v^{-\frac{1}{p}}\phi_{t}^{\lambda}\in L^{p'}(\RR^{+})$ and $f\in L^{p}(\RR^{+},v)$, then
		\begin{align*}
			\int\limits_{0}^{\infty} \phi_{t}^{\lambda}(y)f(y)dy = & \int\limits_{0}^{\infty} v^{-\frac{1}{p}}(y)\phi_{t}^{\lambda}(y)f(y)v^{\frac{1}{p}}(y)dy  \\
			\le & \left( \int\limits_{0}^{\infty} \left(v^{-\frac{1}{p}}(y)\phi_{t}^{\lambda}(y)\right)^{p'} dy \right)^{\frac{1}{p'}} \left( \int\limits_{0}^{\infty} f^{p}(y)v(y)dy \right)^{\frac{1}{p}} <\infty.
		\end{align*}
\par The converse follows by a duality argument, the Landau Theorem. See for example \cite{BS}.
\end{proof}

\par Now we give an answer to the last question, that is the boundedness of the local maximal operator on weighted $L^{p}(v)$, where $v$ is a weight in the class $D_{p}^{heat}(\Delta_{\lambda})$. First let us observe the integrability factor's behaviour:
 \begin{equation}\label{B.integrability.factor.behaviour}
\phi_{t}^{\lambda}(y)= y^{\lambda}\left(\frac{y}{y+1}\right)^{\lambda} e^{-{\frac{y^{2}}{4t}}}= \frac{y^{2\lambda}}{(y+1)^{\lambda}}e^{-{\frac{y^{2}}{4t}}}\sim\left\{
\begin{array}
[c]{l}%
y^{2\lambda}e^{-{\frac{y^{2}}{4t}}}, \,  \text{ if } y\leq 1;  \\
y^{\lambda}e^{-{\frac{y^{2}}{4t}}},\,
\text{ if }y > 1  .
\end{array}
\right.
\end{equation}
Next, we need an auxiliary estimate, which will use the following notation:  \begin{align*} a\vee b =& \max\{a,b\}, \\ a\wedge b = & \min\{a,b\}. \end{align*}

\begin{lem}\label{L.estimate.W}
For $\lambda>0$, $x,y>0$, $T>t>0$ and $M>1$, the following estimate holds:
\begin{align}\label{estimate.W}
y^{2\lambda}W_{t}^{\lambda}(x,y) \le & c(\lambda,M)\left((x\wedge 1)^{-2\lambda}W_{C_M t}^{\triangle}(x-y) \chi_{\{y\le M(x\vee1)\}}\frac{y^{2\lambda}}{(y+1)^{\lambda}} \right. \\
& \left. \quad \quad \quad \quad+ (x\wedge 1)^{-\lambda}\phi_{c_{M}t}^{\lambda}(y)\right),\nonumber
\end{align}
where $W_{t}^{\triangle}(x)=\frac{e^{-\frac{x^{2}}{4t}}}{(\pi t)^{\frac{1}{2}}}$ is the classical Laplace heat kernel and $C_{M}\downarrow 1$ as $M\downarrow 1$.
\end{lem}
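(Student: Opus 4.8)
The plan is to start from the explicit form of the Bessel heat kernel and split the analysis according to the size of the product $xy$ relative to $2t$, exactly as in the two regimes \ref{W.split.xy<2t} and \ref{W.split.xy>2t}. In the \emph{near-diagonal} regime $xy>2t$ we have $y^{2\lambda}W_t^\lambda(x,y)\sim \frac{(xy)^\lambda}{(2t)^{1/2}}e^{-(x-y)^2/4t}$; to compare this with $(x\wedge1)^{-2\lambda}W_{C_Mt}^{\triangle}(x-y)\frac{y^{2\lambda}}{(y+1)^\lambda}$ one writes $x^\lambda = x^{\lambda}$ and absorbs it into $(x\wedge1)^{-2\lambda}$ after splitting $x\le1$ versus $x>1$ (when $x>1$ one pays a factor $(x\vee1)^\lambda$, which is why the cutoff in the lemma is $\{y\le M(x\vee1)\}$ rather than $\{y\le M\}$), and one compares $y^\lambda$ with $\frac{y^{2\lambda}}{(y+1)^\lambda}$: on the region $y\le M(x\vee1)$ cut by the Gaussian $e^{-(x-y)^2/4t}$ this is where the inequality $y^\lambda\le c_M\frac{y^{2\lambda}}{(y+1)^\lambda}$ must hold, which is true once $y$ is comparable to $x\vee1$. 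The Gaussian $e^{-(x-y)^2/4t}$ is then matched with $W_{C_Mt}^{\triangle}(x-y)=\frac{e^{-(x-y)^2/4C_Mt}}{(\pi C_M t)^{1/2}}$; since we are shrinking the exponent we must enlarge $t$ by a constant $C_M$, and the prefactor $(2t)^{-1/2}$ versus $(\pi C_M t)^{-1/2}$ costs only a constant. The part of the near-diagonal regime with $y>M(x\vee1)$ is handled by the second term: there $|x-y|$ is comparable to $y$, so $e^{-(x-y)^2/4t}$ dominates $e^{-c y^2/t}$ and we fold the estimate into $\phi_{c_Mt}^\lambda(y)$ using \ref{B.integrability.factor.behaviour}.

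In the \emph{far-diagonal} regime $xy\le2t$ we have $y^{2\lambda}W_t^\lambda(x,y)\sim \frac{(xy)^{2\lambda}}{t^{\lambda+1/2}}e^{-(x^2+y^2)/4t}$. Here I would simply discard the favorable factor $e^{-x^2/4t}$ and the condition $xy\le2t$ to bound $\frac{x^{2\lambda}}{t^{\lambda+1/2}}\le \frac{x^{2\lambda}}{t^{\lambda+1/2}}$; the point is that $\frac{x^{2\lambda}}{t^{\lambda}}\le c\,(x\wedge1)^{-\lambda}$ using $x\le 2t/y$ is not quite what one wants, so instead one notes $\frac{(xy)^{2\lambda}}{t^{\lambda}} = x^{2\lambda}\frac{y^{2\lambda}}{t^{\lambda}}$ and on $xy\le 2t$, $y\le 2t/x$, giving after absorbing $t$-powers a clean bound by $(x\wedge1)^{-\lambda}\,y^{2\lambda}e^{-y^2/4t}\cdot t^{-1/2}$, and then $y^{2\lambda}e^{-y^2/4t}t^{-1/2}$ is comparable to $\phi_{c_Mt}^\lambda(y)$ on $y\le1$ by \ref{B.integrability.factor.behaviour} (the region $xy\le2t$ with $y>1$ forces $x<2t$ small, a case one can treat separately or fold into the near-diagonal constant). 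Throughout, $C_M$ and $c_M$ are chosen so that $C_M\downarrow1$ as $M\downarrow1$, as required, and the constants depend only on $\lambda$ and $M$.

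The main obstacle I anticipate is the bookkeeping of the $x$-dependent prefactors. The kernel genuinely behaves differently for small $x$ (where $x^{-2\lambda}$ or $x^{-\lambda}$ singularities appear from the measure $d\mu_\lambda$ and from $\varphi_z^\lambda$) and for large $x$ (where one pays positive powers of $x$ but these are controlled by the Gaussian concentration $|x-y|$ small forcing $y\asymp x$). Getting the exponents on $(x\wedge1)$ to come out as $-2\lambda$ in the first term and $-\lambda$ in the second — rather than something larger — requires carefully tracking which powers of $x$ get absorbed into the Laplace kernel's spatial dependence via $y\asymp x$ and which must be paid outright. A secondary technical point is ensuring the cutoff $\{y\le M(x\vee1)\}$ is exactly the right dividing line so that outside it $|x-y|\gtrsim y$ with a constant that degrades controllably as $M\downarrow1$; this is where the choice $C_M\downarrow1$ is forced and must be made compatibly with the earlier requirement $t<t\,(M/(M-1))^2<T$ used in Proposition \ref{prop:B.Equivalent.Conditions}.
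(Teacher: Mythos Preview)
Your overall split into the regimes $xy>2t$ and $xy\le 2t$ matches the paper's, and your treatment of the tail $y>M(x\vee1)$ in the near-diagonal regime (using $|x-y|\gtrsim y$ to land in $\phi^\lambda_{c_Mt}$) is correct in spirit. However, the proposal contains a genuine error and a genuine gap, both in places you flagged as routine.

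\textbf{The error (far-diagonal, $xy\le 2t$, $y\le1$).} You write that the kernel is bounded by $(x\wedge1)^{-\lambda}\,y^{2\lambda}e^{-y^{2}/4t}\,t^{-1/2}$ and then assert that ``$y^{2\lambda}e^{-y^{2}/4t}t^{-1/2}$ is comparable to $\phi^\lambda_{c_Mt}(y)$ on $y\le1$''. This is false: for $y\le1$ one has $\phi^\lambda_{c_Mt}(y)\sim y^{2\lambda}e^{-y^{2}/4c_Mt}$ with \emph{no} factor of $t^{-1/2}$, so your expression exceeds $\phi^\lambda_{c_Mt}(y)$ by a factor blowing up as $t\to0$. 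There is no way to absorb $t^{-1/2}$ here because $y\le1$ gives no lower bound on $y^{2}/t$. The paper does \emph{not} send this region to the $\phi$-term; instead it bounds it by the \emph{first} term of the lemma, using
\[
\frac{(xy)^{2\lambda}}{t^{\lambda+1/2}}e^{-\frac{x^{2}+y^{2}}{4t}}
=\frac{y^{2\lambda}}{x^{2\lambda}}\cdot\Big(\frac{x^{2}}{t}\Big)^{\lambda}e^{-\frac{x^{2}}{4Mt}}\cdot\frac{1}{t^{1/2}}\,e^{-(1-\frac1M)\frac{x^{2}+y^{2}}{4t}}
\le c(\lambda,M)\,\frac{y^{2\lambda}}{(x\wedge1)^{2\lambda}}\,W^{\triangle}_{C_Mt}(x-y),
\]
since $(x^{2}/t)^{\lambda}e^{-x^{2}/4Mt}\le c$ and $(x-y)^{2}\le x^{2}+y^{2}$. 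In other words, the $t^{-1/2}$ is exactly the prefactor of the Laplace heat kernel and must stay there. The $\phi$-term in the lemma is only ever used for $y>M(x\vee1)>1$, where a piece $e^{-c/t}$ split off from $e^{-cy^{2}/t}$ kills the $t^{-1/2}$.

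\textbf{The gap (near-diagonal, $xy>2t$, $y\le1$).} You say the comparison $y^{\lambda}\le c_M\,\frac{y^{2\lambda}}{(y+1)^{\lambda}}$ ``is true once $y$ is comparable to $x\vee1$'', but the region $y\le M(x\vee1)$ certainly contains points with $y\ll1\le x\vee1$, where this inequality fails (it is equivalent to $(1+1/y)^{\lambda}\le c_M$). The Gaussian $e^{-(x-y)^{2}/4t}$ does not by itself force $y\asymp x\vee1$. The paper resolves this by a further split: if $y\le1$ and $2y>x$ then $y/x$ is bounded below and the powers match directly; if $y\le1$ and $2y\le x$ then $|x-y|\ge x/2$, and one spends part of the Gaussian as $(x^{2}/t)^{\lambda}e^{-x^{2}/(16Mt)}\le c$ together with $t/x^{2}<y/(2x)$ (from $xy>2t$) to manufacture the missing factor $(y/x)^{\lambda}$. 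Without this sub-split the bound by the first term is not available.

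In short: the two terms in the lemma are organized by the size of $y$ relative to $M(x\vee1)$, not by the size of $xy$ relative to $2t$. Both kernel regimes feed into the $W^{\triangle}$-term on $\{y\le M(x\vee1)\}$, and only the tail $\{y>M(x\vee1)\}$ (where necessarily $y>1$) is passed to $\phi^\lambda_{c_Mt}$.
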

\begin{proof}
\par From \ref{W.split.xy<2t} and \ref{W.split.xy>2t} it follows that
\begin{align*}
y^{2\lambda} W_{t}^{\lambda}(x,y) \sim & \frac{y^{2\lambda}}{t^{\lambda+\frac{1}{2}}} e^{-\frac{x^{2}+y^{2}}{4t}} \chi_{\{xy\le 2t\}}(y) + \frac{y^{\lambda}}{x^{\lambda}} \frac{e^{-\frac{(x-y)^{2}}{4t}}} {(2t)^{\frac{1}{2}}}\chi_{\{xy>2t\}}(y)=  A + B.
\end{align*}

\par Let us estimate $A$ first. Suppose that $y<1$. In this case we have that
\begin{align*}
 A  \leq & c(\lambda)  \frac{y^{2\lambda}}{x^{2\lambda}}\frac{x^{2\lambda}}{t^{\lambda}}e^{-\frac{x^{2}}{4Mt}}\frac{1}{(2t)^{\frac{1}{2}}}e^{-(1-\frac{1}{M})\frac{(x^2+y^{2})}{4t}} \leq c(\lambda,M)\frac{y^{2\lambda}}{x^{2\lambda}}\frac {e^{-\frac{(x-y)^{2}}{C_M 4t}}}{(2t)^{\frac{1}{2}}}\\
 \leq &  c(\lambda,M)\frac{y^{2\lambda}}{(x\wedge1)^{2\lambda}}W_{C_M t}^{\triangle}(x-y).
\end{align*}
On the other hand, if $y> 1$, since $xy \leq 2t$,  we obtain that
\begin{equation}\label{A1}
 A  \leq  c(\lambda)\frac{y^{2\lambda}}{(xy)^{\lambda}}\frac{1}{(2t)^{\frac{1}{2}}}e^{-\frac{(x^2+y^{2})}{4t}}\leq  c(\lambda)\left(\frac{y\vee1}{x\wedge1}\right)^{\lambda}W_{t}^{\triangle}(x-y).
\end{equation}

\par To estimate $B$, assume first that $y> 1$, thus
\begin{equation}\label{B1}
B\leq c(\lambda)\left(\frac{y\vee1}{x\wedge 1}\right)^{\lambda}W_{t}^{\triangle}(x-y).
\end{equation}
If $y\leq 1$ and $2 y\leq x$, then $x-y\geq\frac{x}{2}$. Therefore, we have
\begin{align*}
B \leq &  c(\lambda)\left(\frac{y}{x}\right)^{\lambda}\frac {1}{(2t)^{\frac{1}{2}}}e^{-\frac{1}{M}\frac{(x-y)^{2}}{4t}}e^{-(1-\frac{1}{M})\frac{(x-y)^{2}}{4t}}\\
\leq & c(\lambda) \left(\frac{y}{x}\right)^{\lambda}\left(\frac{t}{x^2}\right)^{\lambda} \left(\frac{x^2}{t}\right)^{\lambda} e^{-\frac{1}{M}\frac{x^{2}}{16t}}\frac {1}{(2t)^{\frac{1}{2}}}e^{-\frac{(x-y)^{2}}{4C_Mt}}\\
\leq &  c(\lambda,M) \left(\frac{y}{x}\right)^{\lambda}\left(\frac{t}{x^2}\right)^{\lambda} W_{C_M t}^{\triangle}(x-y) \\
\leq & c(\lambda,M) \left(\frac{y\wedge1}{x\wedge1}\right)^{2\lambda}W_{C_M t}^{\triangle}(x-y).
\end{align*}
where in the last inequality we use that $xy>2t$. Now, If $y\leq 1$ and $2 y> x$, we clearly have that
\begin{align*}
B\leq  c(\lambda)\left(\frac{y\wedge1}{x \wedge 1}\right)^{2\lambda}W_{t}^{\triangle}(x-y).
\end{align*}
Hence, collecting the above inequalities,  we get that
\begin{align*}
y^{2\lambda}W_{t}^{\lambda}(x,y) \le c(\lambda,M) \frac{1}{({x \wedge 1})^{2\lambda}}W_{C_Mt}^{\triangle}(x-y)\frac{y^{2\lambda}}{(y+1)^{\lambda}}.
\end{align*}
\par Finally, note that if  $y> M(x\vee1)> 1$, then $y-x > y(\frac{M-1}{M})$. Hence, from \ref{A1} and \ref{B1}, we obtain that
\begin {align*}
y^{2\lambda}W_{t}^{\lambda}(x,y) \leq& c(\lambda)(x\wedge 1)^{-\lambda}\quad W_{t}^{\triangle}(x-y)\frac{y^{2\lambda}}{(y+1)^{\lambda}}\\
\leq &  c(\lambda)(x\wedge 1)^{-\lambda}\frac{1}{(2t)^{\frac{1}{2}}}e^{-\frac{(M-1)^2y^{2}}{4tM^2}}\frac{y^{2\lambda}}{(y+1)^{\lambda}}\\
\leq & c(\lambda)(x\wedge 1)^{-\lambda}(2t)^{-\frac{1}{2}}e^{-\frac{1}{4t M}} e^{-(\frac{M-1}{M})^3\frac{y^{2}}{4t}}\frac{y^{2\lambda}}{(y+1)^{\lambda}}\\
\leq & c(\lambda,M)(x\wedge 1)^{-\lambda}\phi_{c_{M}t}^{\lambda}(y),
\end{align*}
which finishes the proof.
\end{proof}


\begin{thm}\label{thm:Lpv.Lpu.boundness.local.maximal}
Let $1<p<\infty$ and $R>1$. Let $v$ be a weight such that $v^{-\frac{1}{p}}\in L^{p'}_{loc}(\RR)$. Then there exists a weight $u$ such that the local maximal operator defined as
\begin{equation}\label{local.maximal.operator}
\mathcal{M}_{R}^{loc}f(x)=\sup\limits_{x\in B_{r}} \int\limits_{B_{r}(x)}f(y)\chi_{|y|<Rx}(y)dy
\end{equation}
is bounded from $L^p(v)$ to $L^p(u)$. Moreover,
\begin{itemize}
			\item[(i)] If $||v^{-\frac{1}{p}}e^{-A|y|^{2}}||_{p'}<\infty$ for all $A>A_{0}$, where $A_{0}\ge 0$ is fixed, then for every $\sigma<1$ we can find a weight $u$ such that also \begin{align}\label{u1}
||u^{-\frac{\sigma}{p}}e^{-Ay^{2}}||_{p'}<\infty \end{align} for all $A>A_{0}\sigma R^{2}$. In particular, if $A_{0}=0$ or $\sigma<\frac{1}{R^{2}}$ then \ref{u1} holds for all $A>A_{0}$.

\item[(ii)] If we define $\|v\|_{D_{p}}=||v^{-\frac{1}{p}}(y\vee1)^{-2}||_{p'}<\infty$, then for every $\sigma<1$ we can find a weight $u$ such that also
    \begin{align}\label{u2}
||u^{-\frac{\sigma}{p}}(y\vee1)^{-2}||_{p'}<\infty. \end{align}
\end{itemize}
\end{thm}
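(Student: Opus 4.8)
The plan is to reduce the weighted boundedness of the abstract local maximal operator $\mathcal{M}_R^{\mathrm{loc}}$ to the classical theory of the Hardy–Littlewood maximal operator and then to construct the target weight $u$ explicitly so that the two extra integrability properties (i) and (ii) survive. First I would record the pointwise domination
\[
\mathcal{M}_R^{\mathrm{loc}}f(x)\le C\,M\!\left(f\chi_{\{|y|<2Rx\}}\right)(x),
\]
where $M$ is the uncentered Hardy–Littlewood maximal operator on $\RR$ (the factor $2R$ absorbing the fact that for $x\in B_r$ one has $|y|<Rx$ comparable to $|y|<R(x+r)$). By the classical duality/extrapolation fact that $M:L^p(v)\to L^p(u)$ holds whenever one takes, say, $u=\big(M(\ell\, v^{-p'/p})\big)^{1-p}$ for a suitable fixed auxiliary function $\ell\in L^1$ with $\ell>0$ — equivalently, by invoking that for any $w\in L^1_{\mathrm{loc}}$ with $w>0$ a.e. one has $M:L^p(w)\to L^p\big((Mw')^{1-p}\big)$ after the substitution $w'=w^{1-p'}$ — one gets a first candidate weight $u_0$ with $\mathcal{M}_R^{\mathrm{loc}}:L^p(v)\to L^p(u_0)$. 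The hypothesis $v^{-1/p}\in L^{p'}_{\mathrm{loc}}$ is exactly what guarantees $w'=v^{1-p'}\in L^1_{\mathrm{loc}}$ so that $Mw'$ is finite a.e. and $u_0$ is a genuine weight.

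The second and more delicate half is to arrange the decay of $u$ at infinity so that (i) and (ii) hold; here a bare application of the classical construction gives no control, so I would build $u$ by hand rather than take the extrapolation weight. The key observation is that $\mathcal{M}_R^{\mathrm{loc}}f(x)$ only ever averages $f$ over the region $|y|<Rx$ (up to the harmless dilation above), so if $g$ is \emph{radially increasing and doubling-like} in the sense that $g(y)\le C g(x)$ for $|y|\lesssim Rx$, then $g\cdot\mathcal{M}_R^{\mathrm{loc}}f \le C\,\mathcal{M}_R^{\mathrm{loc}}(g f)$ pointwise. I would therefore first produce a weight $u_1$ with $\mathcal{M}_R^{\mathrm{loc}}:L^p(v g^{p})\to L^p(u_1)$ using the first paragraph applied to the weight $vg^p$ (legitimate since $(vg^p)^{-1/p}=v^{-1/p}g^{-1}$ is still in $L^{p'}_{\mathrm{loc}}$), and then set $u=u_1 g^{-p}$; the displayed domination shows $\mathcal{M}_R^{\mathrm{loc}}:L^p(v)\to L^p(u)$. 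The point of this manoeuvre is that now $u^{-\sigma/p}= u_1^{-\sigma/p} g^{\sigma}$, and choosing $g$ to be the relevant gain factor — $g(y)=e^{A_0\sigma R^2 y^2 + \delta}$-type in case (i), or $g(y)=(y\vee1)^{2}$-type in case (ii) — lets the known integrability of $v^{-1/p}$ against the \emph{sharper} factor feed directly into the bound for $u^{-\sigma/p}$ against the coarser one, exactly because the averaging radius scales by $R$ and $\sigma<1$ gives room. I would verify in case (i) that $vg^p$ with $g=e^{a y^2}$ still satisfies $\|(vg^p)^{-1/p}e^{-Ay^2}\|_{p'}<\infty$ for $A>A_0+a$, pick $a$ so that $a/\sigma$ (the scale after the $R^2$ dilation and the $\sigma$-power) lands above the threshold $A_0 R^2$, and track the constants; case (ii) is the same bookkeeping with polynomial weights and is where the definition $\|v\|_{D_p}=\|v^{-1/p}(y\vee1)^{-2}\|_{p'}$ is used verbatim.

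The main obstacle I anticipate is not the maximal-function estimate itself — that is classical — but the careful propagation of the \emph{quantitative} weight information through the extrapolation step: one must check that the extrapolation weight $u_1$ built from $vg^p$ does not itself blow up faster than $g^p$ at infinity, so that $u=u_1g^{-p}$ retains usable decay. Concretely, using $u_1=(M((vg^p)^{1-p'}))^{1-p}$, one needs a lower bound $M((vg^p)^{1-p'})(x)\gtrsim$ (a controlled expression) to get an \emph{upper} bound on $u_1$; since $(vg^p)^{1-p'}$ is an honest locally integrable positive function this is available but must be done with the dilation factor $R$ kept explicit, because that $R^2$ is precisely what appears in the threshold $A_0\sigma R^2$ in (i). The rest — the elementary inequalities $g(y)\le Cg(x)$ for $|y|\lesssim Rx$ for the chosen $g$'s, and the $L^{p'}$ norm computations — are routine and I would not belabour them.
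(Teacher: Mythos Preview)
Your proposal differs from the paper's argument and, as written, has two genuine gaps.

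First, the commutation step $g\,\mathcal{M}_R^{\mathrm{loc}}f \le C\,\mathcal{M}_R^{\mathrm{loc}}(gf)$ requires $g(x)\le Cg(y)$ for all $y$ in the averaging region $|y|<Rx$. For the increasing gain factors you propose, $g(y)=e^{ay^2}$ or $g(y)=(y\vee1)^2$, this fails badly: $y$ may sit near the origin while $x$ is large, so $g(x)/g(y)$ is unbounded. The inequality that \emph{does} hold for increasing $g$ is the reverse one, $\mathcal{M}_R^{\mathrm{loc}}(gf)(x)\le g(Rx)\,\mathcal{M}_R^{\mathrm{loc}}f(x)$, which does not help transfer boundedness in the direction you need.

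Second, and more fundamentally, the extrapolation weight $u_1=(M((vg^p)^{1-p'}))^{1-p}$ carries no quantitative information of the kind you need for (i) and (ii). To bound $\|u^{-\sigma/p}\phi\|_{p'}$ you need an \emph{upper} bound on $u^{-\sigma/p}$, hence a \emph{lower} bound on $u_1$, hence an \emph{upper} bound on $M((vg^p)^{1-p'})$. You wrote that you need a lower bound on $M(\cdot)$ to get an upper bound on $u_1$, which is the wrong direction for the goal; and in any case the Hardy--Littlewood maximal function of a merely locally integrable function admits no useful pointwise upper bound. The sentence in which you acknowledge this as ``the main obstacle'' is accurate, but the proposal does not actually overcome it.

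The paper sidesteps both issues by a dyadic construction rather than a global extrapolation. One decomposes $\RR^+$ into annuli $E_k=\{2^{k-1}\le |x|<2^k\}$, observes that on $E_k$ the operator $\mathcal{M}_R^{\mathrm{loc}}$ only sees $f$ restricted to $\{|y|<R2^k\}$, and applies Rubio de Francia's factorization theorem on each $E_k$ separately to obtain a weight $U_k$ supported in $E_k$ with the \emph{normalized} bound $\|U_k^{-1}\|_{L^{s/(p-s)}(E_k)}\le 1$ for a chosen $s<1$ tied to $\sigma$ via $\sigma p'/p=s/(p-s)$. That normalization is precisely the quantitative control on $U_k^{-1}$ that your global construction lacks. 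One then glues $u=\sum_k (2^{\gamma k}C_k)^{-p}U_k\chi_{E_k}$ with $C_k=c\,|E_k|^{1/s-1}\|v^{-1/p}\chi_{\{|y|<R2^k\}}\|_{p'}$; the hypothesis $\|v\|_{D_p}<\infty$ gives $C_k\lesssim 2^{k(1/s-1)+2k}$, and a short computation shows $\|u^{-\sigma/p}(y\vee1)^{-2}\|_{p'}^{p'}$ is a convergent geometric series provided $0<\gamma<(1-\sigma)(1+1/p)$. Part (i) is handled identically (and the paper in fact cites it from \cite{GHSTV}).
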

\begin{proof}
The proof of item (i) is contained in Theorem 2.1 of \cite{GHSTV}. We only check (\ref{u2}). In order to do this,
we  recall the estimates and the constants defined in that paper. By the factorization Theorem of Rubio de Francia (see \cite[Thm. VI.4.2]{GR}),
we can assure the existence of some weight $U_k$, supported in a interval
$E_k$, such that $\|U_k^{-1}\|_{L^{\frac s{p-s}}}\leq 1$, $s< 1$ and
\[\int_{E_k} \big|\mathcal{M}_{R}^{loc}f(x)\big|^p\,U_k(x)\,dx\, \leq \,
C^p_k\,\|f\|^p_{L^p(v)}  ,
\]
where
\[ E_0=\{|x|< 1\} ,\quad E_k=\{2^{k-1}\leq |x|<2^k\}, \quad
k=1,2,\ldots,\]
and
$C_k=c_{s,p}|E_k|^{\frac1s-1}V_k$, with the constants $V_k$ given by
\begin{align*}
V_k = & \left|\left|v^{-\frac1p}\,\,\chi_{\{|y|<Rb^k\}}\right|\right|_{p'}=
\left|\left|v^{-\frac1p}\,|(y\vee1)|^{-2}|(y\vee1)|^2\,\chi_{\{|y|<R2^k\}}\right|\right|_{p'}\\
\leq & \|v\|_{D_{p}}\,(R2^{k})^2.
\end{align*}
In this way, to obtain the bound of $\mathcal{M}_{R}^{loc}$ it suffices to consider the
weight $u$ defined by
\begin{equation}\label{ucons}
u(x)=\sum_{k=0}^\infty \frac1{(2^{\gamma
k} C_k)^p}\,U_k(x)\chi_{E_k}(x),
 \end{equation}
 for some $\gamma>0$ to be determined later.

Given $\sigma<1$, we first select $s<1$ such that
$\frac{\sigma p'}p=\frac{s}{p-s}$. Then,
\begin{align}\label{auxs}
\|u^\sigma\|_{D_{p}}^{p'}  = & \int_{\RR} u(y)^{-\frac {\sigma
p'}p}\,|y|^{-2p'}\,dy\, = \, \sum_{k=0}^\infty\big(2^{\gamma k}
C_k\big)^{\sigma p'}\,\int_{E_k} U_k(y)^{-\frac
s{p-s}}\,|y|^{-2p'}\,dy \\\nonumber
 \leq & c\,\sum_{k=0}^\infty \Big(2^{\gamma
k}|E_k|^{\frac1s-1}\,2^{2k}\Big)^{\sigma
p'}\, 2^{-2kp'}= \sum_{k=0}^\infty 2^{-kp'\left(2(1-\sigma)-\gamma-\frac{1-\sigma}{p'}\right)},
\end{align}
where in the last inequality we have used the facts that $\|U_k^{-1}\|_{L^{\frac s{p-s}}}\leq 1$ and $(\frac{1}{s}-1)\sigma p'= 1-\sigma$.

\par Now, this series is convergent provided that $0<\gamma < (1-\sigma)\left(1+\frac{1}{p}\right)$.

 \end{proof}

\par To end this section we give the proof of Theorem \ref{thm:Lpv.Lpu.boundness.maximal.B.heat.operator}, item (i).

\begin{proof}
\par Let us fix $1<p<\infty$ and $a>0$. Let $v\in D_{p}^{heat}(\Delta_{\lambda})$. This means that $||v^{-\frac{1}{p}}\phi_{t}^{\lambda}||_{p'}<\infty$, where $\phi_{t}^{\lambda}(y)$ is the integrability factor given in proposition \ref{prop:B.Equivalent.Conditions}, namely
\begin{equation}\label{Ei}
\phi_{t}^{\lambda}(y)= y^{\lambda}\left(\frac{y}{y+1}\right)^{\lambda} e^{-{\frac{y^{2}}{4t}}}\sim (y\wedge 1)^{2\lambda} (y\vee 1)^{\lambda}   e^{-{\frac{y^{2}}{4t}}}.
\end{equation}
\par We need to show that the local maximal operator $W^{\lambda,\ast}_{a}$ defined in \ref{B.local.maximal.operator} maps $L^{p}(v)$ to $L^{p}(u)$ boundedly, for some weight $u$. Moreover, if $\sigma<1$ we need to find a weight $u$ such that for all $t>0$,
\begin{equation}
||u^{-\frac{\sigma}{p}}\phi_{t}^{\lambda}||_{p'}<\infty.
\end{equation}

\par For $f\in L^{p}(v)$ and $M>1$ to be chosen later, let us split as follows:
\begin{align*}
W_{a}^{\lambda,\ast}f(x) \le & \sup\limits_{0<t<a} \int W_{t}^{\lambda}(x,y)f(y)\chi_{\{y\le Mx \}}(y)d\mu(y)  \\
& + \sup\limits_{0<t<a} \int W_{t}^{\lambda}(x,y)f(y)\chi_{\{y>Mx\}}(y)d\mu(y)  \\
= & Af(x)+Bf(x).
\end{align*}
\par For $Af(x)$ let us recall Lemma \ref{L.estimate.W} and write
\begin{align*}
Af(x) \le & (c\lambda,M)(x\vee 1)^{-2 \lambda} \\ & \quad \sup\limits_{0<t<a} \int W_{C_M t}^{\triangle}(x-y)\frac{y^{2\lambda}}{(y+1)^{\lambda}}f(y)\chi_{\{ y\le Mx\}}(y)d(y).
\end{align*}

\par By a standard argument of slicing into dyadic shells, it follows that $Af(x)\le c(\lambda,M)(x\vee 1)^{-2\lambda} \mathcal{M}_{M}^{loc}\left(\frac{y^{2\lambda}}{(y+1)^{\lambda}}f\right)(x)$, where ${M}_{M}^{loc}f$ is the local maximal function considered in \ref{local.maximal.operator}, extended to $|y|\le M(|x|\vee 1)$. From Theorem \ref{thm:Lpv.Lpu.boundness.local.maximal} item (i), if we set $\sigma_{0}=\frac{1}{M^{2}}<1$ in the case $T<\infty$,  $\widetilde{f}=\frac{y^{2\lambda}}{(y+1)^{\lambda}}f$ and
$\tv=\left(\frac{y^{2\lambda}}{(y+1)^{\lambda}}\right)^{-p}v$, then for all $\sigma\le\sigma_{0}<1$ there exists a weight $\tu$ such that $$||\mathcal{M}_{M}^{loc}(\widetilde{f})||_{L^{p}(\tu)}\le c ||\widetilde{f}||_{L^{p}(\widetilde{v})}= c ||f||_{L^{p}(v)}$$ and
\begin{equation}\label{u}
\left|\left|(\tu)^{-\frac{\sigma}{p}}e^{-{\frac{y^{2}}{4t}}}\right|\right|_{p'}<\infty.
\end{equation}
provided that  $\left|\left|(\tv)^{-\frac{1}{p}}e^{-{\frac{y^{2}}{4t}}}\right|\right|_{p'}<\infty$ for all $t<T$ . This is true because  $v\in D_{p}^{heat}(\Delta_{\lambda})$.

\par Now choosing $u_1 (x) =(x\wedge 1)^{2\lambda p}\tu(x)$, we have
\begin{align*}
\left|\left|u_1^{-\frac{\sigma}{p}}\phi_{t}^{\lambda}\right|\right|_{p'}=&\left|\left|(\tu)^{-\frac{\sigma}{p}}(x\vee 1)^{-\sigma 2\lambda }\phi_{t}^{\lambda}\right|\right|_{p'}\\
\le & \left|\left|(\tu)^{-\frac{\sigma}{p}}x ^{2(1-\sigma)\lambda} \chi_{\{x\le 1\}}\right|\right|_{p'} + \left|\left|(\tu)^{-\frac{\sigma}{p}}x^{\lambda}  e^{-{\frac{x^{2}}{4t}}}\chi_{\{x\ge 1\}}\right|\right|_{p'} \\
\leq & \left|\left|(\tu)^{-\frac{\sigma}{p}}  e^{-{\frac{x^{2}}{4t(1+\epsilon)}}}\chi_{\{x\geq1\}}\right|\right|_{p'}.\\
\end{align*}
which is finite in view of  (\ref{u}), by choosing $\epsilon$ small enought such that $a(1+\epsilon)< T$.
This proves that $u_1^\sigma\in D_{p}^{heat}(\Delta_{\lambda})$. Clearly, when $T=\infty$, by applying Theorem \ref{thm:Lpv.Lpu.boundness.local.maximal} item (i) with $A_0 =0$ we can choose any $\sigma<1$ to obtain the same conclusion.

\par We now estimate $Bf$. From Lemma \ref{L.estimate.W}, we get
 \begin{align*}
Bf(x)\leq & \frac{c(\lambda,M)} {(x\wedge1)^{\lambda}}\sup\limits_{0<t<a} \int \phi_{C_{M}t}^{\lambda}(y)\chi_{\{y>Mx\}}(y).
 \end{align*}
 Choosing $M>1$ such that $t_{0}=C_M a<T$ when $T<\infty$, and applying H\"{o}lder's inequality, we have
\begin{align*}
 \int \phi_{t_0}^{\lambda}(y)f(y)dy = & \int \phi_{t_0}^{\lambda}(y)v^{-\frac{1}{p}}f(y)v^{\frac{1}{p}}dy  \\
 \le &  \left(\int \left(\phi_{t_0}^{\lambda}(y)v^{-\frac{1}{p}}\right)^{p'} dy \right)^{\frac{1}{p'}} \left( \int f^{p}(y)v(y)dy \right)^{\frac{1}{p}}  \\
 = & \left|\left|v^{-\frac{1}{p}}\phi_{t_0}^{\lambda}\right|\right|_{p'} ||f||_{L^{p}(v)}.
\end{align*}
We note that if $T=\infty$, then any $M>1$ works.
\par It follows that $$Bf(x)\le \frac{c(\lambda,M)}{(x\wedge1)^{\lambda}} ||f||_{L^{p}(v)}= c(x)||f||_{L^{p}(v)}.$$
Thus, setting a weight $u_{2}(x)\le \frac{1}{c(x)^p (1+x)^{p}}$, we see that
$$||Bf||_{L^{p}(u_{2})} \le c ||f||_{L^{p}(v)};$$ and recalling the behaviour of the integrating function $\phi_{t}^{\lambda}$ given in \ref{Ei} we also see that
$$\left|\left|u_2^{-\frac\sigma p}\, \phi_{t}^{\lambda} \right|\right|_{p'}\leq \left|\left|x^{\lambda(2-\sigma)}\chi_{\{x\le 1\}}\right|\right|_{p'} +\left|\left|(1+x)^{\sigma}x^{\lambda}e^{-\frac{|x|^2}{4t}}\right|\right|_{p'}<\infty$$
for all $t<T$.
\par The Theorem follows by taking $u(x)=\min\{u_{1}(x),u_{2}(x)\}$.

\end{proof}

\section{Conditions on data $f$ for almost everywhere convergence for the Bessel Poisson equation}\label{Poisson}

\par In this section we focus our attention on the initial value problem for the Poisson equation. Let us start with computing two estimates for the Poisson kernel which will be useful.

\par Throughout this section, we will consider $\lambda\geq0$. Recall also that we consider the integrating factor given by
\begin{equation}\label{P.integrability.factor}
\phi^{\lambda}(y)=\frac{y^{2\lambda}}{(y^{2}+1)^{\lambda+1}}\sim\left\{
\begin{array}
[c]{l}%
y^{2\lambda}, \,  \text{ if } y\leq 1  \\
y^{-2},\,
\text{ if }y > 1  .
\end{array}
\right.
\end{equation}

\par The first estimate is the same as shown in Proposition 4 of \cite{BHNV}, where the proof is based on the expression \ref{B.Poisson.Kernel.explicit} of the kernel and  behaviour properties of the hypergeometric functions.

\begin{lem}\label{lem:B.Kernel.Estimate.P}
Given $t>0$ and $x>0$ there exists a constant $c_{\lambda}$ such that
\begin{equation}\label{B.Kernel.Estimate.P}
\frac{c_{\lambda}^{-1}t}{[(x-y)^{2}+t^{2}](x^{2}+y^{2}+t^{2})^{\lambda}} \le P_{t}^{\lambda}(x,y) \le  \frac{c_{\lambda}t}{[(x-y)^{2}+t^{2}](x^{2}+y^{2}+t^{2})^{\lambda}}.
\end{equation}
\end{lem}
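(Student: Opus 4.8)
The plan is to derive the estimate \ref{B.Kernel.Estimate.P} directly from the explicit expression \ref{B.Poisson.Kernel.explicit} of the Bessel Poisson kernel in terms of the hypergeometric function ${}_2F_1$. Writing $s = \left(\frac{2xy}{x^2+y^2+t^2}\right)^2$, note that $0 \le s < 1$ always, since $2xy \le x^2 + y^2 < x^2+y^2+t^2$; hence the ${}_2F_1$ in \ref{B.Poisson.Kernel.explicit} is evaluated at a point of the interval $[0,1)$. The key will be to control the size of ${}_2F_1\!\left(\frac{\lambda+1}{2};\frac{\lambda+2}{2};\frac{2\lambda+1}{2};s\right)$ from above and below by a power of $(1-s)$.

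First I would recall the relevant behaviour of Gauss's hypergeometric function ${}_2F_1(a;b;c;s)$ for real parameters as $s\to 1^-$: when $c-a-b>0$ it converges to $\frac{\Gamma(c)\Gamma(c-a-b)}{\Gamma(c-a)\Gamma(c-b)}$; when $c-a-b=0$ it blows up logarithmically; and when $c-a-b<0$ it behaves like a constant times $(1-s)^{c-a-b}$. Here $a=\frac{\lambda+1}{2}$, $b=\frac{\lambda+2}{2}$, $c=\frac{2\lambda+1}{2}$, so $c-a-b = \frac{2\lambda+1-\lambda-1-\lambda-2}{2} = -1$, which is strictly negative and, crucially, independent of $\lambda$. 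Therefore ${}_2F_1\!\left(\frac{\lambda+1}{2};\frac{\lambda+2}{2};\frac{2\lambda+1}{2};s\right) \sim c_\lambda (1-s)^{-1}$ as $s \to 1^-$, and since the function is continuous and strictly positive on $[0,1)$ (all parameters making the series have positive coefficients), we get two-sided bounds
\[
c_\lambda^{-1}(1-s)^{-1} \le {}_2F_1\!\left(\tfrac{\lambda+1}{2};\tfrac{\lambda+2}{2};\tfrac{2\lambda+1}{2};s\right) \le c_\lambda (1-s)^{-1}
\]
uniformly for $s\in[0,1)$, for a constant $c_\lambda$ depending only on $\lambda$. (For the lower bound near $s=0$ one uses ${}_2F_1\ge 1$ together with $(1-s)^{-1}\le$ const there; for the upper bound near $s=0$ one uses boundedness of ${}_2F_1$ on compact subsets.)

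Next I would substitute this into \ref{B.Poisson.Kernel.explicit} and compute $1-s$ explicitly:
\[
1-s = 1 - \frac{4x^2y^2}{(x^2+y^2+t^2)^2} = \frac{(x^2+y^2+t^2)^2 - 4x^2y^2}{(x^2+y^2+t^2)^2} = \frac{\bigl((x-y)^2+t^2\bigr)\bigl((x+y)^2+t^2\bigr)}{(x^2+y^2+t^2)^2}.
\]
Plugging in, the factor $(x^2+y^2+t^2)^{-(\lambda+1)}$ from \ref{B.Poisson.Kernel.explicit} combines with $(1-s)^{-1}$ to give, up to the constant $2\pi^{-1/2}\Gamma(\lambda+1)/\Gamma(\lambda+\tfrac12)$ and the factor $t$,
\[
P_t^\lambda(x,y) \sim \frac{t}{(x^2+y^2+t^2)^{\lambda+1}} \cdot \frac{(x^2+y^2+t^2)^2}{\bigl((x-y)^2+t^2\bigr)\bigl((x+y)^2+t^2\bigr)} = \frac{t\,(x^2+y^2+t^2)^{1-\lambda}}{\bigl((x-y)^2+t^2\bigr)\bigl((x+y)^2+t^2\bigr)}.
\]
Finally I would observe that $(x+y)^2 + t^2 \asymp x^2+y^2+t^2$ (with comparability constants $1$ and $2$, since $(x+y)^2 \le 2(x^2+y^2)$ and $(x+y)^2 \ge x^2+y^2$ for $x,y>0$), which collapses one power of $(x^2+y^2+t^2)$ in numerator and denominator, leaving exactly
\[
P_t^\lambda(x,y) \asymp \frac{t\,(x^2+y^2+t^2)^{-\lambda}}{(x-y)^2+t^2} = \frac{t}{\bigl((x-y)^2+t^2\bigr)(x^2+y^2+t^2)^\lambda},
\]
which is \ref{B.Kernel.Estimate.P}. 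I expect the main obstacle to be stating the hypergeometric asymptotics with genuinely $\lambda$-\emph{uniform} constants rather than just a pointwise-in-$s$ asymptotic: one needs the connection formula (or the Euler integral representation of ${}_2F_1$) to see that the implied constants depend only on the parameters $a,b,c$ — hence on $\lambda$ — and not on $s$, and to handle the transition region between $s$ near $0$ and $s$ near $1$. This is exactly the content cited from Proposition 4 of \cite{BHNV}, so in the write-up I would lean on that reference for the hypergeometric input and present the algebraic simplification above as the bulk of the argument.
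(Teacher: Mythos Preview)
Your proof is correct, but it follows a genuinely different route from the one the paper actually writes out. The paper's preamble to the lemma mentions that the \cite{BHNV} proof uses the hypergeometric expression \ref{B.Poisson.Kernel.explicit} and the behaviour of ${}_2F_1$ near $s=1$ --- and that is exactly what you do --- but the paper then supplies its own, independent argument. It starts from the subordination formula \ref{B.Poisson.Kernel.subordination.formula}, substitutes the explicit heat kernel \ref{B.Heat.Kernel.explicit}, changes variables to reach an integral of the form $\int_0^\infty e^{-(\beta/\alpha)z}\,\mathcal I_{\lambda-\frac12}(z)\,z^{1/2}\,dz$ with $\beta=\tfrac14(x^2+y^2+t^2)$ and $\alpha=\tfrac12 xy$, splits at $z=1$, and applies the asymptotics \ref{Modified.B.for.small.z}--\ref{Modified.B.for.big.z} of $\mathcal I_\nu$. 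This yields two pieces $I_1\sim t/\beta^{\lambda+1}$ and $I_2\sim (\beta/\alpha)^\lambda e^{-\beta/\alpha}\,t/(\beta^\lambda(\beta-\alpha))$, and the paper then checks by hand that $I_1+I_2$ is comparable to $t\big/[\beta^\lambda(\beta-\alpha)]$, which is \ref{B.Kernel.Estimate.P}.

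Your approach is shorter and more transparent once one grants the ${}_2F_1$ asymptotic: the computation $c-a-b=-1$ and the factorisation of $1-s$ do all the work, and the comparability $(x+y)^2+t^2\asymp x^2+y^2+t^2$ is immediate. The paper's approach, by contrast, is self-contained (it needs only the elementary Bessel asymptotics already used for the heat kernel, not the hypergeometric connection formula) and keeps the proof within the toolkit of Section~\ref{heat}. Either route earns the same constant depending only on $\lambda$.
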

\begin{proof}
\par Recall that, from the explicit expression for the Bessel heat kernel \ref{B.Heat.Kernel.explicit},
\begin{align*}
P_{t}^{\lambda}(x,y) = & \frac{t}{\sqrt{4\pi}} \int\limits_{0}^{\infty} e^{-\frac{t^{2}}{4u}} W_{u}^{\lambda}(x,y) \frac{du}{u^{\frac{3}{2}}}  \\
= & \frac{t}{4\sqrt{\pi}} (xy)^{-\lambda+\frac{1}{2}} \int\limits_{0}^{\infty} e^{-\frac{(x^{2}+y^{2}+t^{2})}{4u}} I_{\lambda-\frac{1}{2}}\left(\frac{xy}{2u}\right) \frac{du}{u^{\frac{5}{2}}},
\end{align*}
and if we perform the variable change given by $z=\frac{1}{u}$, we have that
\begin{align}\label{Poisson.expression}
P_{t}^{\lambda}(x,y)=& \frac{t}{4\sqrt{\pi}} (xy)^{-\lambda+\frac{1}{2}} \int\limits_{0}^{\infty} e^{-\frac{(x^{2}+y^{2}+t^{2})}{4}z} I_{\lambda-\frac{1}{2}}\left(\frac{xy}{2}z\right) z^{\frac{1}{2}} dz.
\end{align}
Thus, if we change variables by $\frac{xy}{2}z\longrightarrow z$,
\begin{align} \label{Poisson.expression.alphabeta}
P_{t}^{\lambda}(x,y) =& c {\frac{t}{\alpha^{\lambda+1}}} \int\limits_{0}^{\infty} e^{-\frac{\beta}{\alpha}z} I_{\lambda-1}(z) z^{\frac{1}{2}} dz,
\end{align}
where $\beta=\frac{x^{2}+y^{2}+t^{2}}{4}$ and $\alpha=\frac{xy}{2}$. Now, if we split the integral in $0<z<1$ and $1<z$ and apply properties \ref{Modified.B.for.small.z} and \ref{Modified.B.for.big.z} of the modified Bessel functions,  we get that
\begin{align*}
P_{t}^{\lambda}(x,y) \sim & c \frac{t}{\alpha^{\lambda+1}} \int\limits_{0}^{1} e^{-\frac{\beta}{\alpha}z} z^{\lambda} dz +  c \frac{t}{\alpha^{\lambda+1}} \int\limits_{1}^{\infty} e^{-\frac{(\beta-\alpha)}{\alpha}z}  dz =I_{1}+I_{2}.
\end{align*}
Since $\frac{\beta}{\alpha}>1$, by changing variables according to $v=\frac{\beta}{\alpha}z$, we can write
\begin{align*}
I_{1}=c \frac{t}{\beta^{\lambda+1}}\left(\int\limits_{0}^{1} e^{-v}v^{\lambda}dv+\int\limits_{1}^{\frac{\beta}{\alpha}}e^{-v}v^{\lambda}dv \right).
\end{align*}
Since if $1<v<\frac{\beta}{\alpha}$ then $e^{-v}\le e^{-v}v^{\lambda}<c_{\lambda} e^{-\frac{v}{2}}$, it follows that $\int\limits_{1}^{\frac{\beta}{\alpha}}e^{-v}v^{\lambda}dv\sim c_{\lambda}$. Thus, $$I_{1}\sim c_{\lambda} \frac{t}{\beta^{\lambda+1}}=c_{\lambda} \frac{t}{(x^{2}+y^{2}+t^{2})^{\lambda+1}}.$$ On the other hand,
\begin{align*}
I_{2} = & c  t \frac{e^{-\frac{\beta-\alpha}{\alpha}}}{\alpha^{\lambda}(\beta-\alpha)} = c \left(\frac{\beta}{\alpha}\right)^{\lambda} e^{-\frac{\beta}{\alpha}} \frac{t}{\beta^{\lambda}(\beta-\alpha)}.
\end{align*}
It is clear that
$$
I_{1}+I_{2}\leq\frac{c_{\lambda}t}{(x^{2}+y^{2}+t^{2})^{\lambda}[(x-y)^{2}+t^{2}]}.
$$
Now let us see the estimate from below.
\begin{align*}
I_{1}+I_{2} = &\frac{c_{\lambda}t}{\beta^\lambda}\left(\left(\frac{\beta}{\alpha}\right)^{\lambda} e^{-\frac{\beta}{\alpha}} \frac{1}{\beta-\alpha}+\frac{1}{\beta}\right)
= \frac{c_{\lambda}t}{\beta^\lambda}\frac{\left(\frac{\beta}{\alpha}\right)^{\lambda} e^{-\frac{\beta}{\alpha}}+ 1 - \frac{\alpha}{\beta}}{\beta- \alpha}
\end{align*}
Taking $z=\frac{\beta}{\alpha}> 1$, we note that
$$
A = z^{\lambda} e^{-z}+1 - z^{-1} \longmapsto 1\quad\mbox{when $z\longrightarrow \infty$}\quad.
$$
Thus, there exists $N>1$ such that $A >1/2$ for every $z > N$ and clearly $A > (e^{-N}\wedge\frac{1}{2})$.
Hence the estimate holds.

\end{proof}

\par From this Lemma \ref{lem:B.Kernel.Estimate.P} the second estimate follows:

\begin{lem}\label{L.estimate.P}
For $\lambda>0$, $x,y,t>0$ and $M>1$ the following estimate holds
\begin{align*}
y^{2\lambda}P_{t}^{\lambda}(x,y)\le & c_{\lambda,M}  \left((x\wedge1)^{-2\lambda} P_{t}^{\triangle}(x-y)(y\wedge1)^{2\lambda}\chi_{y\le M(x\vee 1)}(y) + t \phi_{\lambda}(y) \right),
\end{align*}
where $P_{t}^{\Delta}(x)=c\frac{t}{t^{2}+x^{2}}$ is the classical Poisson kernel.
\end{lem}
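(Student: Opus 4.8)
The plan is to derive the estimate for $y^{2\lambda}P_t^\lambda(x,y)$ directly from the two-sided bound in Lemma~\ref{lem:B.Kernel.Estimate.P}, mimicking the structure of the proof of Lemma~\ref{L.estimate.W} but with the Gaussian replaced by the Poisson kernel $P_t^\triangle(x)=ct/(t^2+x^2)$. Starting from
\[
y^{2\lambda}P_t^\lambda(x,y)\le \frac{c_\lambda\, t\, y^{2\lambda}}{[(x-y)^2+t^2](x^2+y^2+t^2)^\lambda},
\]
I would split according to whether $y\le M(x\vee1)$ or $y>M(x\vee1)$, and within the first region according to whether $y\le1$ or $y>1$. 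The recurring elementary facts I will use are: $(x^2+y^2+t^2)\ge (x\vee1)^2$ when... more precisely $(x^2+y^2+t^2)^{-\lambda}\le (x\vee1)^{-2\lambda}$ fails in general, so instead I will use $(x^2+y^2+t^2)\gtrsim x^2$ together with $(x^2+y^2+t^2)\gtrsim y^2$ and $(x^2+y^2+t^2)\ge t^2$, choosing whichever lower bound is convenient in each subregion.

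For the region $y\le M(x\vee1)$: when $x\ge1$ we have $x^2+y^2+t^2\ge x^2\ge(x\vee1)^2$, so $(x^2+y^2+t^2)^{-\lambda}\le(x\wedge1)^{-2\lambda}(x\vee1)^{-2\lambda}\cdot(x\vee1)^{2\lambda}\cdots$; more cleanly, since $x\wedge1=1$ here, $(x^2+y^2+t^2)^{-\lambda}\le x^{-2\lambda}=(x\wedge1)^{-2\lambda}x^{-2\lambda}\le(x\wedge1)^{-2\lambda}$ and, using $y\le Mx$, $y^{2\lambda}\le M^{2\lambda}x^{2\lambda}\cdot\big((y\wedge1)/\ldots\big)$—here I must be careful to produce the factor $(y\wedge1)^{2\lambda}$. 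The trick, exactly as in Lemma~\ref{L.estimate.W}, is: if $y\le1$ use $(x^2+y^2+t^2)^{-\lambda}\le (x\wedge1)^{-2\lambda}$ when $x\le1$ (since then the denominator is $\ge$ a constant, absorbing into $c_{\lambda,M}$ — wait, it is $\ge y^2$, not a constant), so instead bound $y^{2\lambda}(x^2+y^2+t^2)^{-\lambda}\le y^{2\lambda}\cdot(\text{something})$. When $y\le1\le x$: $y^{2\lambda}(x^2+y^2+t^2)^{-\lambda}\le y^{2\lambda}x^{-2\lambda}=(y\wedge1)^{2\lambda}(x\wedge1)^{-2\lambda}$. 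When $y\le1$, $x\le1$: use $x^2+y^2+t^2\ge\max(x^2,y^2)$, so if $x\le y$ then $(x^2+y^2+t^2)^{-\lambda}\le y^{-2\lambda}$ giving $y^{2\lambda}(\cdots)^{-\lambda}\le1=(y\wedge1)^{2\lambda}(x\wedge1)^{-2\lambda}$, while if $y\le x$ then $(\cdots)^{-\lambda}\le x^{-2\lambda}$ giving $y^{2\lambda}(\cdots)^{-\lambda}\le (y/x)^{2\lambda}=(y\wedge1)^{2\lambda}(x\wedge1)^{-2\lambda}$. When $1<y\le M(x\vee1)$ and $x\le1$: then $M(x\vee1)=M$, so $y\le M$ and $y^{2\lambda}\le M^{2\lambda}$, a constant; meanwhile $(x^2+y^2+t^2)^{-\lambda}\le x^{-2\lambda}$ is useless, but $(x^2+y^2+t^2)^{-\lambda}\le 1$ (since $y>1$), so $y^{2\lambda}(\cdots)^{-\lambda}\le M^{2\lambda}\le c_{\lambda,M}=c_{\lambda,M}(y\wedge1)^{2\lambda}(x\wedge1)^{-2\lambda}\cdot(x\wedge1)^{2\lambda}$—hmm, here $(y\wedge1)^{2\lambda}=1$ and $(x\wedge1)^{-2\lambda}=x^{-2\lambda}\ge1$, so the bound $\le c_{\lambda,M}(y\wedge1)^{2\lambda}(x\wedge1)^{-2\lambda}$ holds trivially. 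When $1<y\le M(x\vee1)=Mx$ and $x>1$: $y^{2\lambda}\le M^{2\lambda}x^{2\lambda}$ and $(x^2+y^2+t^2)^{-\lambda}\le x^{-2\lambda}$, so $y^{2\lambda}(\cdots)^{-\lambda}\le M^{2\lambda}=c_{\lambda,M}(y\wedge1)^{2\lambda}(x\wedge1)^{-2\lambda}$ since both trailing factors are $1$. In every subcase the remaining factor $t/[(x-y)^2+t^2]=c^{-1}P_t^\triangle(x-y)$ is exactly the Poisson kernel, producing the first term.

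For the region $y>M(x\vee1)>1$: here $y-x>y(M-1)/M$, so $(x-y)^2+t^2\ge (M-1)^2y^2/M^2$, hence $t/[(x-y)^2+t^2]\le cM^2 t/y^2$. Also $y^{2\lambda}(x^2+y^2+t^2)^{-\lambda}\le y^{2\lambda}\cdot y^{-2\lambda}=1$. Multiplying, $y^{2\lambda}P_t^\lambda(x,y)\le c_{\lambda,M}\, t/y^2$. Since $y>1$, $\phi^\lambda(y)=y^{2\lambda}/(y^2+1)^{\lambda+1}\sim y^{-2}$, so $t/y^2\le c\, t\,\phi^\lambda(y)$, which gives the second term of the claimed estimate. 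Collecting the two regions and noting $(x\wedge1)^{-2\lambda}\ge1$ absorbs any loose constant powers of $(x\wedge1)$ on the $t\phi^\lambda(y)$ term, the lemma follows. The only mildly delicate point — the \textbf{main obstacle} — is bookkeeping the case $x\le1$, $y>1$ inside the near region, where neither $x^{-2\lambda}$ nor $y^{-2\lambda}$ alone efficiently absorbs $(x^2+y^2+t^2)^{-\lambda}$; there one simply uses $(x^2+y^2+t^2)^{-\lambda}\le1$ and that $y\le M$ makes $y^{2\lambda}$ a constant, which is harmless since the target bound allows the large factor $(x\wedge1)^{-2\lambda}$. No genuine analytic difficulty arises beyond these elementary comparisons.
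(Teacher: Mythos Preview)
Your proposal is correct and follows essentially the same approach as the paper: both split according to $y\le M(x\vee1)$ versus $y>M(x\vee1)$, use the upper bound from Lemma~\ref{lem:B.Kernel.Estimate.P}, and in the far region use $|x-y|\gtrsim y$ together with $x^2+y^2+t^2\ge y^2$ to produce $t\phi^\lambda(y)$. The only difference is economy: in the near region the paper asserts the single inequality
\[
\frac{y^{2\lambda}}{(x^2+y^2+t^2)^\lambda}\;\le\;c_\lambda\,(x\wedge1)^{-2\lambda}(y\wedge1)^{2\lambda}
\]
in one line (which follows immediately by distinguishing only $y\le1$ from $y>1$ and using $(x\wedge1)^2\le x^2+y^2+t^2$, respectively $y^2\le x^2+y^2+t^2$), whereas you verify the same inequality through a finer case analysis. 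Your argument is sound; it could simply be compressed.
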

\begin{proof}
\par From Lemma \ref{lem:B.Kernel.Estimate.P}, we have that
\begin{align*}
y^{2\lambda}P_{t}^{\lambda}(x,y) \chi_{y\le M (x\vee 1)}(y) \le & c_{\lambda} P_{t}^{\Delta}(x-y) \frac{y^{2\lambda}}{(x^{2}+y^{2}+t^{2})^{\lambda}} \chi_{y\le M (x\vee 1)}(y)\\
\le &  c_{\lambda}(x\wedge1)^{-2\lambda} P_{t}^{\Delta}(x-y) \chi_{y\le M (x\vee 1)}(y\wedge1)^{2\lambda}.
\end{align*}

\par Now, again from Lemma \ref{lem:B.Kernel.Estimate.P}, we have that
\begin{align*}
y^{2\lambda}P_{t}^{\lambda}(x,y) \chi_{y> M (x\vee 1)}(y) \le &  \frac{c_{\lambda}t y^{2\lambda}}{[(x-y)^{2}+t^{2}](x^{2}+y^{2}+t^{2})^{\lambda}} \chi_{y> M (x\vee 1)}(y).
\end{align*}
\par Since $y>M(x\wedge 1)$, hence $y>1$, it follows that $\frac{1}{(x-y)^{2}+t^{2}}\le \frac{c_{M}}{y^{2}}$ and also $\frac{1}{x^{2}+y^{2}+t^{2}}\le \frac{c_{M}}{y^{2}}$. Thus
\begin{align*}
y^{2\lambda}P_{t}^{\lambda}(x,y) \chi_{y> M (x\vee 1)}(y) \le &  c_{\lambda,M} \frac{1}{y^{2}}\le  c_{\lambda,M} \phi_{\lambda}(y).
\end{align*}
\par Thus we get the desired estimate.
\end{proof}

\par Next we follow the same steps as those for the heat problem: in order to prove Theorem \ref{thm:Main} in the Poisson context, we need three propositions. First, we need to characterize the Bessel Poisson integral by an integrability factor.

\begin{prop}\label{prop:B.Equivalent.Conditions.P}
The following statements are equivalent
	\begin{itemize}
			\item[(i)] $\int\limits_{0}^{\infty} P_{t}^{\lambda}(x,y)|f(y)|d\mu_{\lambda}(y)<\infty$, for all $t>0$ and $x>0$.
			\item[(ii)] $\int\limits_{0}^{\infty} P_{t}^{\lambda}(x_{t},y)|f(y)|d\mu_{\lambda}(y)<\infty$, for all $t>0$ and some $x_{t}>0$.
			\item[(iii)] $\int\limits_{0}^{\infty} \phi^{\lambda}(y) |f(y)|dy<\infty$,  where
\begin{equation}\label{B.integrability.factor.P}
\phi^{\lambda}(y)= {\frac{y^{2\lambda}}{(y^{2}+1)^{\lambda+1}}}.
\end{equation}
\end{itemize}

\end{prop}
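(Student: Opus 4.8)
The plan is to mirror the structure of the proof of Proposition~\ref{prop:B.Equivalent.Conditions} in the heat setting, using Lemma~\ref{lem:B.Kernel.Estimate.P} in place of the split estimates \ref{W.split.xy<2t}--\ref{W.split.xy>2t}. The implication (i)$\Rightarrow$(ii) is trivial. For (ii)$\Rightarrow$(iii), I would fix $t_0>0$ and $x_0>0$ for which the Poisson integral is finite, write
\[
\infty > \int_0^\infty x_0^{2\lambda} P_{t_0}^\lambda(x_0,y)|f(y)| y^{2\lambda}\,dy,
\]
and invoke the lower bound in \ref{B.Kernel.Estimate.P}, namely $P_{t_0}^\lambda(x_0,y)\ge c_\lambda^{-1} t_0\,[(x_0-y)^2+t_0^2]^{-1}(x_0^2+y^2+t_0^2)^{-\lambda}$. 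Multiplying through by $x_0^{2\lambda}y^{2\lambda}$ one gets a kernel comparable (with constants depending on $x_0,t_0,\lambda$) to $y^{2\lambda}(x_0^2+y^2+t_0^2)^{-\lambda}[(x_0-y)^2+t_0^2]^{-1}$. The only issue is the behaviour as $y\to\infty$ and as $y\to 0^+$: for $y$ large, $(x_0^2+y^2+t_0^2)^{-\lambda}\sim y^{-2\lambda}$ and $[(x_0-y)^2+t_0^2]^{-1}\sim y^{-2}$, so the kernel behaves like $y^{-2}\sim\phi^\lambda(y)$ up to constants; for $y$ near $0$, the factor $y^{2\lambda}$ survives and the remaining factors are bounded above and below by positive constants, so the kernel is comparable to $y^{2\lambda}\sim\phi^\lambda(y)$. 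Hence the integral controls $\int_0^\infty \phi^\lambda(y)|f(y)|\,dy$ from below, giving (iii).

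For (iii)$\Rightarrow$(i), I would fix $t>0$, $x>0$ and use the upper bound in \ref{B.Kernel.Estimate.P}, so that
\[
\int_0^\infty x^{2\lambda}P_t^\lambda(x,y)|f(y)|y^{2\lambda}\,dy \le c_\lambda\, x^{2\lambda} t \int_0^\infty \frac{y^{2\lambda}|f(y)|}{[(x-y)^2+t^2](x^2+y^2+t^2)^\lambda}\,dy.
\]
It then suffices to show that for fixed $x,t$ the quantity $x^{2\lambda}t\,y^{2\lambda}[(x-y)^2+t^2]^{-1}(x^2+y^2+t^2)^{-\lambda}$ is dominated by $c(x,t,\lambda)\,\phi^\lambda(y)=c(x,t,\lambda)\,y^{2\lambda}(y^2+1)^{-\lambda-1}$ uniformly in $y>0$. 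Splitting into $y\le 1$ and $y>1$: on $y\le 1$ the left side is $\le c(x,t,\lambda)\,y^{2\lambda}$ (the denominators are bounded below away from $0$), and $\phi^\lambda(y)\sim y^{2\lambda}$, so the bound holds; on $y>1$, one has $(x^2+y^2+t^2)^{-\lambda}\le (y^2)^{-\lambda}$ and $[(x-y)^2+t^2]^{-1}\le c(x,t)\,y^{-2}$ for $y>1$ (since $(x-y)^2+t^2 \ge c(x,t) y^2$ eventually; for $1<y$ bounded this is a finite positive ratio absorbed into the constant), giving $\le c(x,t,\lambda)\,y^{2\lambda-2\lambda-2}=c(x,t,\lambda)\,y^{-2}\sim c(x,t,\lambda)\,\phi^\lambda(y)$. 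So the whole integral is bounded by $c(x,t,\lambda)\int_0^\infty\phi^\lambda(y)|f(y)|\,dy<\infty$, which is (i).

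\textbf{Expected main obstacle.} The calculations are all elementary ratios of polynomials, so the only point that needs a little care is making the constants in the comparisons genuinely uniform in $y$ on the transitional region $y\asymp 1$ (and, in the lower bound, checking that the factor $[(x_0-y)^2+t_0^2]^{-1}$ does not create a spurious vanishing when $y$ is near $x_0$ — it does not, since $t_0^2>0$ keeps it bounded below). Unlike the heat case there is no exponential factor, so there is no analogue of the auxiliary inequality \ref{desigualdad.exp.GHSTV} to invoke and no $M$ to tune; the whole argument is a direct consequence of Lemma~\ref{lem:B.Kernel.Estimate.P} together with the two-sided asymptotics \ref{P.integrability.factor} for $\phi^\lambda$.
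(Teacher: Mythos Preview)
Your proposal is correct and follows essentially the same route as the paper: both directions rest entirely on the two-sided kernel estimate of Lemma~\ref{lem:B.Kernel.Estimate.P} together with the elementary polynomial comparison between $y^{2\lambda}\,[(x-y)^2+t^2]^{-1}(x^2+y^2+t^2)^{-\lambda}$ and $\phi^\lambda(y)$. The only cosmetic difference is in (iii)$\Rightarrow$(i): the paper splits at $y=M(x\vee 1)$ and disposes of the near part by continuity of the kernel on a compact interval, whereas you split at $y=1$ and give an explicit pointwise bound throughout---your version is in fact slightly cleaner and avoids the mild opacity of the compactness step.
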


\begin{proof}

\par Observe that (i) trivially implies (ii).

\par Let us prove that (ii) implies (iii). Fix $t>0$. We have from estimate \ref{B.Kernel.Estimate.P} that for some $x=x_{t}>0$, $$\int\limits_{0}^{\infty} P_{t}^{\lambda}(x_{t},y)|f(y)|d\mu_{\lambda}(y) \sim c_{\lambda} \int\limits_{0}^{\infty} \frac{ty^{2\lambda}}{[(x-y)^{2}+t^{2}](x^{2}+y^{2}+t^{2})^{\lambda}} dy <\infty.$$

\par If $x^{2}+t^{2}\le 1$, then $y^{2}+1\ge x^{2}+y^{2}+t^{2}$. If $x^{2}+t^{2}> 1$, then $y^{2}+1 \ge \left(\frac{1}{x^{2}+t^{2}}\right)y^{2}+1=\frac{1}{x^{2}+t^{2}}(x^{2}+y^{2}+t^{2})$. Also, $x^{2}+y^{2}+t^{2}\ge (x-y)^{2}+t^{2}$, thus $$\frac{(x^{2}+t^{2})\vee 1}{y^{2}+1}\le \frac{1}{x^{2}+y^{2}+t^{2}}\le\frac{1}{(x-y)^{2}+t^{2}}.$$ Hence,
\begin{align*}
\infty > & \int\limits_{0}^{\infty} P_{t}^{\lambda}(x_{t},y)|f(y)|d\mu_{\lambda}(y) \ge c_{\lambda} ((x^{2}+t^{2})\vee 1) t \int\limits_{0}^{\infty} \frac{y^{2\lambda}}{(y^{2}+1)^{\lambda+1}} f(y) dy  \\
& = c_{\lambda,x,t} \int\limits_{0}^{\infty} \phi^{\lambda}(y) f(y) dy.
\end{align*}

\par We will now show that (iii) implies (i). Take $t>0$ and $x>0$. We use estimate \ref{B.Kernel.Estimate.P} again.
\begin{align*}
& \int\limits_{0}^{\infty} P_{t}^{\lambda}(x,y) f(y) d\mu_{\lambda}(y)  \sim c_{\lambda} \int\limits_{0}^{\infty} \frac{ty^{2\lambda}}{[(x-y)^{2}+t^{2}](x^{2}+y^{2}+t^{2})^{\lambda}} dy  \\
& = c_{\lambda} \int\limits_{y\le M(x\vee1)} \frac{ty^{2\lambda}}{[(x-y)^{2}+t^{2}](x^{2}+y^{2}+t^{2})^{\lambda}} dy  \\
& \qquad + c_{\lambda} \int\limits_{y> M(x\vee1)} \frac{ty^{2\lambda}}{[(x-y)^{2}+t^{2}](x^{2}+y^{2}+t^{2})^{\lambda}} dy  \\
& \qquad + c_{\lambda} \int\limits_{y> M(x\vee1)} \frac{ty^{2\lambda}}{[(x-y)^{2}+t^{2}](x^{2}+y^{2}+t^{2})^{\lambda}} dy  \\
& =I_{1}+I_{2},
\end{align*}
for any $M>1$.

\par Let us consider first $I_{2}$. We have that $y>Mx$, and it is easy to see that $\frac{M-1}{M} y \le |y-x|$. Thus, $$\left(\frac{M-1}{M}\right)^{2}y^{2}+t^{2} \le (x-y)^{2}+t^{2}.$$

\par If $\left(\frac{M-1}{M}\right)^{2} \le t^{2}$ then $y^{2}+1\le \left(\frac{M}{M-1}\right)^{2} [(x-y)^{2}+t^{2}]$, and if $\left(\frac{M-1}{M}\right)^{2} \ge t^{2}$ then $y^{2}+1\le \frac{1}{t^{2}} [(x-y)^{2}+t^{2}]$. Hence $y^{2}+1\le \frac{1}{\left(\frac{M-1}{M}\right)^{2}\wedge t^{2}} [(x-y)^{2}+t^{2}]$. Again, since $x^{2}+y^{2}+t^{2}\ge (x-y)^{2}+t^{2}$, we have that $$ \frac{1}{(x^{2}+y^{2}+t^{2})^{\lambda}[(x-y)^{2}+t^{2}]} \le c_{x,t,M} \frac{1}{(y^{2}+1)^{\lambda+1}}.$$

\par As for $I_{1}$, it is finite because the function $y\to \frac{y^{2\lambda}}{[(x-y)^{2}+t^{2}](x^{2}+y^{2}+t^{2})^{\lambda}}$ is continuous on the compact region $y\le M(x\vee 1)$.

\par Thus the proof ends.
\end{proof}

\par Next we will see that under the conditions of the proposition above, the solution is smooth.

\begin{prop}\label{prop:B.solution.is.C.infinity.P}
If $f$ satisfies the conditions in Proposition \ref{prop:B.Equivalent.Conditions.P}, then
	\begin{equation}
	\label{B.solution.is.C.infinity.P}
		u(t,x)=\int\limits_{0}^{\infty}P_{t}^{\lambda}(x,y)f(y)d\mu_{\lambda}(y) \in C^{\infty}(\RR^{+}\times\RR^{+}).
	\end{equation}
\end{prop}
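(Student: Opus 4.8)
The plan is to show that differentiation under the integral sign is justified, which reduces to checking that the integrals of all partial derivatives of the Poisson kernel against $|f|\,d\mu_\lambda$ converge. First I would note that since the Poisson kernel $P_t^\lambda(x,y)$ solves the Poisson equation $u_{tt}=\Delta_\lambda u$ in $(t,x)$, it suffices (exactly as in the proof of Proposition \ref{prop:B.solution.is.C.infinity}) to control mixed derivatives by reducing them to $t$-derivatives; alternatively one can bound $\partial_t^\alpha\partial_x^\beta P_t^\lambda(x,y)$ directly. The cleanest route is to use the subordination formula \ref{B.Poisson.Kernel.subordination.formula}, which expresses $P_t^\lambda$ as an integral of the heat kernel $W_u^\lambda(x,y)$ against the smooth kernel $\frac{t}{\sqrt{4\pi}}e^{-t^2/4u}u^{-3/2}$; differentiating in $t$ and $x$ then falls onto either the subordination weight (which is a Schwartz-type function of $t$ for each fixed $u$) or the heat kernel, and in the latter case the estimates from Proposition \ref{prop:B.solution.is.C.infinity} apply.

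Concretely, I would fix a compact set $[t_0,t_1]\times[x_0,x_1]\subset\RR^+\times\RR^+$ and produce, for each pair $(\alpha,\beta)$, a dominating function $g_{\alpha,\beta}(y)$ independent of $(t,x)$ in that box with $\int_0^\infty g_{\alpha,\beta}(y)|f(y)|\,d\mu_\lambda(y)<\infty$; the dominated convergence theorem (applied inductively to justify passing each derivative inside) then gives $u\in C^\infty$. For the pointwise bound on the derivatives I would again split into the regions $xy\le 2t$ and $xy>2t$ using \ref{W.split.xy<2t}, \ref{W.split.xy>2t} for the heat kernel inside the subordination integral, or directly use the explicit estimate \ref{B.Kernel.Estimate.P} for $P_t^\lambda$ together with the fact that each differentiation in $t$ or $x$ produces extra algebraic factors that are harmless on a compact box in $(t,x)$ and extra powers of $y$; the key observation, mirroring the end of the proof of Proposition \ref{prop:B.solution.is.C.infinity}, is that the derivatives are controlled by $\phi^\lambda(y)$ times a polynomial in $y$, and that $y^k\phi^\lambda(y)$ still satisfies condition (iii) of Proposition \ref{prop:B.Equivalent.Conditions.P} because $\phi^\lambda(y)\sim y^{-2}$ decays only polynomially --- wait, that last point is exactly the subtlety.

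Indeed, the main obstacle is precisely this: unlike the heat case, where the Gaussian factor $e^{-y^2/4t}$ in $\phi_t^\lambda$ absorbs any polynomial growth $y^k$ (so $y^k f$ trivially inherits the integrability condition), here $\phi^\lambda(y)\sim y^{-2}$ for large $y$, so multiplying by $y^k$ destroys integrability. Therefore one cannot afford to let $x$-derivatives generate positive powers of $y$ in the large-$y$ region. The resolution is that for $y> M(x\vee 1)$ the kernel and all its $x$-derivatives decay like $t\,y^{-2\lambda-2}$ uniformly for $x$ in a compact set --- each $\partial_x$ hitting the factors $[(x-y)^2+t^2]^{-1}$ or $(x^2+y^2+t^2)^{-\lambda}$ in \ref{B.Kernel.Estimate.P} only improves the decay in $y$ by a factor $y^{-1}$ (since $|x-y|\sim y$ and $x^2+y^2+t^2\sim y^2$ there) --- so no dangerous powers appear; only in the bounded region $y\le M(x\vee1)$, where $f$ is integrable against $d\mu_\lambda$ by continuity, does one use the compactness argument. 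I would carry this out by writing $P_t^\lambda(x,y)=c\,t\,\frac{g_t(x,y)}{[(x-y)^2+t^2](x^2+y^2+t^2)^\lambda}$ with $g_t$ a smooth bounded correction (from Lemma \ref{lem:B.Kernel.Estimate.P} the ratio to the displayed expression is between $c_\lambda^{-1}$ and $c_\lambda$, and smoothness comes from \ref{B.Poisson.Kernel.explicit}), and then estimating $|\partial_t^\alpha\partial_x^\beta P_t^\lambda(x,y)|\le c_{\alpha,\beta}(K)\,\phi^\lambda(y)$ for $(t,x)\in K$ and $y> M(x\vee1)$, while on $y\le M(x\vee1)$ all derivatives are jointly continuous hence locally bounded, giving a dominating function of the form $c(K)\chi_{\{y\le M(x_1\vee1)\}} + c(K)\phi^\lambda(y)$, both pieces integrable against $|f|\,d\mu_\lambda$.
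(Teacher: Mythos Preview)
Your approach is correct but takes a genuinely different route from the paper's. You correctly identify the key obstruction---that, unlike the heat case, polynomial factors in $y$ are \emph{not} absorbed by $\phi^\lambda(y)\sim y^{-2}$---and then resolve it by a spatial splitting into $y\le M(x\vee1)$ (compactness/continuity) and $y>M(x\vee1)$ (where each $\partial_x$ or $\partial_t$ hitting the explicit kernel only improves the $y$-decay, since the hypergeometric argument $\bigl(\tfrac{2xy}{x^2+y^2+t^2}\bigr)^2$ stays bounded away from $1$ there). This works, though the claim that the correction factor $g_t$ has bounded derivatives requires appealing to the smoothness of ${}_2F_1$ on compact subsets of $[0,1)$, which you gesture at but do not spell out.

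The paper instead avoids any spatial splitting and any direct analysis of derivatives of the explicit formula. It uses the integral representation \eqref{Poisson.expression} to compute $\partial_t P_t^\lambda(x,y)$ and finds that it decomposes as $\tfrac{1}{t}P_t^\lambda(x,y)$ plus a term bounded by $c\,P_{t/\sqrt2}^\lambda(x,y)$ (using only that $z e^{-t^2 z/4}\le c\,e^{-t^2 z/8}$). Finiteness of $\int_0^\infty|\partial_t P_t^\lambda(x,y)|\,|f(y)|\,d\mu_\lambda(y)$ then follows \emph{immediately} from Proposition~\ref{prop:B.Equivalent.Conditions.P}(i), since both terms are Poisson kernels at times $t$ and $t/\sqrt2$. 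This trick---bounding $t$-derivatives by Poisson kernels at rescaled times---is cleaner, iterates trivially to higher $t$-derivatives, and sidesteps entirely the subtlety about polynomial growth in $y$ that you had to work around. Your argument buys a more explicit uniform dominating function on compact boxes; the paper's buys brevity and a structural reduction to the already-established equivalence.
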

\begin{proof}
\par We need to show that for all $\alpha,\beta\ge 0$, $$\int\limits_{0}^{\infty} \left| \partial_{t}^{\alpha} \partial_{x}^{\beta} P_{t}^{\lambda}(x,y) \right| f(y) d\mu(y) < \infty.$$ Since the kernel $P_{t}^{\lambda}$ satisfies the Poisson equation for the Bessel operator, we only need to show that
\begin{align}\label{diff.under.int.sign.P}
\int\limits_{0}^{\infty} |\partial_{t}P_{t}^{\lambda}(x,y)| f(y) d\mu(y)<\infty.
\end{align}
\par To differentiate under the integral sign, from expression \ref{Poisson.expression} we compute
	\begin{align*}
	\partial_{t}P_{t}^{\lambda}(x,y) = & \frac{1}{4\sqrt{\pi}} (xy)^{-\lambda+\frac{1}{2}} \int\limits_{0}^{\infty} e^{-\frac{x^{2}+y^{2}+t^{2}}{4}z} I_{\lambda-\frac{1}{2}}\left(\frac{xy}{2}z\right) z^{\frac{1}{2}} du  \\
	 & +\frac{t(xy)^{-\lambda+\frac{1}{2}}}{4\sqrt{\pi}}  \int\limits_{0}^{\infty} \left(\frac{-2t}{4}z\right) e^{-\frac{t^{2}}{4}z} e^{-\frac{x^{2}+y^{2}}{4}z} I_{\lambda-\frac{1}{2}}\left(\frac{xy}{2}z\right) z^{\frac{1}{2}} dz  \\
	 = & A_{t}^{\lambda}(x,y) + B_{t}^{\lambda}(x,y).
\end{align*}
\par We have that $A_{t}^{\lambda}(x,y)=\frac{1}{t}P_{t}^{\lambda}(x,y)$ and
$$B_{t}^{\lambda}(x,y) \le  c \frac{t}{4\sqrt{\pi}} (xy)^{-\lambda+\frac{1}{2}} \int\limits_{0}^{\infty} e^{-\frac{t^{2}}{8}z}e^{-\frac{x^{2}+y^{2}}{4}z} I_{\lambda-\frac{1}{2}}\left(\frac{xy}{2}z\right) z^{\frac{1}{2}} dz = c P_{t/\sqrt{2}}^{\lambda}(x,y).$$
Since $f$ satisfies condition (i) of Proposition \ref{prop:B.Equivalent.Conditions.P}, \ref{diff.under.int.sign.P} follows.
\end{proof}

\par The last piece of the puzzle is the following proposition.

\begin{prop}\label{prop:B.solution.ae.limit.P}
If $f$ satisfies the conditions in Proposition \ref{prop:B.Equivalent.Conditions.P}, then
	\begin{equation}
	\label{B.solution.ae.limit.P}
		\lim\limits_{t\to 0^{+}} e^{-t\sqrt{\Delta_{\lambda}}}f(x)=f(x), \qquad \mbox{ a.e. } x>0.
	\end{equation}
\end{prop}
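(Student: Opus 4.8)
The plan is to mimic the argument of Proposition \ref{prop:B.solution.ae.limit} from the heat setting, reducing the almost everywhere convergence to a splitting of $f$ into a compactly supported piece and a tail, where the tail contribution is shown to be uniformly small near $t=0$ and the compact piece is handled by the general theory of symmetric diffusion semigroups. Concretely, fix $n_{0}\in\NN$ and prove \ref{B.solution.ae.limit.P} for a.e. $0<x\le n_{0}$; since $n_{0}$ is arbitrary this gives the result on all of $\RR^{+}$. Write $f=f\chi_{|y|\le M}+f\chi_{|y|>M}=f_{1}+f_{2}$, with $M>2n_{0}$ to be chosen.

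For the tail term $e^{-t\sqrt{\Delta_{\lambda}}}f_{2}(x)=\int_{|y|>M}P_{t}^{\lambda}(x,y)f(y)d\mu_{\lambda}(y)$, I would invoke the kernel bound of Lemma \ref{lem:B.Kernel.Estimate.P}, namely $P_{t}^{\lambda}(x,y)\le c_{\lambda}t[(x-y)^{2}+t^{2}]^{-1}(x^{2}+y^{2}+t^{2})^{-\lambda}$. For $0<x\le n_{0}$ and $y>M>2n_{0}$ one has $|x-y|\ge y/2$ and $x^{2}+y^{2}+t^{2}\ge y^{2}$, so
\begin{align*}
y^{2\lambda}P_{t}^{\lambda}(x,y)\le \frac{c_{\lambda}\,t\,y^{2\lambda}}{[(y/2)^{2}+t^{2}]\,y^{2\lambda}}\le \frac{c_{\lambda}\,t}{(y/2)^{2}+t^{2}}\le \frac{c_{\lambda}}{t}\cdot\frac{t^{2}}{(y/2)^{2}}\wedge 1 \le c_{\lambda}\frac{t}{y^{2}},
\end{align*}
hence $e^{-t\sqrt{\Delta_{\lambda}}}f_{2}(x)\le c_{\lambda}t\int_{|y|>M}y^{-2}f(y)dy\le c_{\lambda}t\int_{|y|>M}\phi^{\lambda}(y)f(y)dy$, using $\phi^{\lambda}(y)\sim y^{-2}$ for $y>1$ from \ref{P.integrability.factor}. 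By Proposition \ref{prop:B.Equivalent.Conditions.P}(iii) the integral $\int_{0}^{\infty}\phi^{\lambda}(y)f(y)dy$ is finite, so in fact $e^{-t\sqrt{\Delta_{\lambda}}}f_{2}(x)\le c_{\lambda}t\,\|\phi^{\lambda}f\|_{1}\to 0$ as $t\to 0^{+}$, uniformly in $0<x\le n_{0}$ (and even for every fixed $M>2n_{0}$, which is cleaner than the heat case).

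For the compact piece $f_{1}$, note $f_{1}\in L^{1}(\RR^{+},d\mu_{\lambda})$ since $\phi^{\lambda}(y)\sim y^{2\lambda}\ge c$ on $[c_{0},M]$ for the relevant range and $f_{1}$ is supported in a bounded set away from having a non-integrable singularity at the origin — more carefully, $f_{1}d\mu_{\lambda}$ is a finite measure because $\int_{0}^{M}y^{2\lambda}f(y)dy\le c\int_{0}^{1}\phi^{\lambda}(y)f(y)dy+c\int_{1}^{M}\phi^{\lambda}(y)f(y)dy<\infty$ by Proposition \ref{prop:B.Equivalent.Conditions.P}(iii) and \ref{P.integrability.factor}. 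Since $e^{-t\sqrt{\Delta_{\lambda}}}$ is the Poisson (subordinated) semigroup of the symmetric diffusion semigroup $e^{-t\Delta_{\lambda}}$ in the sense of \cite{S}, the general maximal-function theory for such semigroups (Stein) gives $\lim_{t\to 0^{+}}e^{-t\sqrt{\Delta_{\lambda}}}f_{1}(x)=f_{1}(x)$ for a.e. $x$, in particular for a.e. $0<x\le n_{0}$; and on that range $f_{1}=f$. Combining the two pieces, $\limsup_{t\to 0^{+}}|e^{-t\sqrt{\Delta_{\lambda}}}f(x)-f(x)|\le \limsup_{t\to 0^{+}}e^{-t\sqrt{\Delta_{\lambda}}}f_{2}(x)=0$ for a.e. $0<x\le n_{0}$, and letting $n_{0}\to\infty$ finishes the proof.

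The main obstacle, such as it is, is bookkeeping rather than substance: one must make sure the constant $c_{\lambda}$ in the tail estimate does not degenerate as $x\to 0^{+}$ (it does not, because the bound $|x-y|\ge y/2$ only improves as $x$ shrinks), and one must justify that $f_{1}\in L^{1}(d\mu_{\lambda})$ so that Stein's theorem applies — this is immediate from Proposition \ref{prop:B.Equivalent.Conditions.P}(iii) together with the equivalence \ref{P.integrability.factor}. Unlike the heat case, no choice of auxiliary time $t_{0}<T$ is needed here since there is no Gaussian to reabsorb; the decay in $t$ is linear and automatic.
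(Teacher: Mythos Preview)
Your proposal is correct and follows essentially the same approach as the paper: split $f=f_{1}+f_{2}$ with a cutoff at $M>2n_{0}$, control the tail via the kernel bound to get $e^{-t\sqrt{\Delta_{\lambda}}}f_{2}(x)\le c_{\lambda}t\int_{y>M}\phi^{\lambda}(y)|f(y)|dy\to 0$, and handle $f_{1}\in L^{1}(d\mu_{\lambda})$ by Stein's semigroup theory. The only cosmetic difference is that you invoke Lemma~\ref{lem:B.Kernel.Estimate.P} directly to bound the tail, whereas the paper routes through the repackaged estimate of Lemma~\ref{L.estimate.P}; your observation that the linear factor of $t$ already forces the tail to vanish (so no careful choice of $M$ is needed beyond $M>2n_{0}$) is a small but valid simplification.
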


\begin{proof}

\par As usual, we prove the limit for $x\le n_{0}$, for all fixed $n_{0}\in\mathbb{NN}$. Indeed, let us split
$$f=f\chi_{|y|\le M} + f\chi_{|y|> M} =f_{1}+f_{2},$$ where $M>0$ will be chosen.

\par On one hand, if we consider $M>2n_{0}$, we have that from Lemma \ref{L.estimate.P}, since $2n_{0}>2 x$,
\begin{align*}
e^{-t\sqrt{\Delta_{\lambda}}}f_{2}(x) = & \int\limits_{|y|> M} P_{t}^{\lambda}(x,y)f(y)d\mu(y)  \\
\le & c(c_{0},\lambda) t \int\limits_{|y|> M} \phi^{\lambda}(y) \chi_{y>2x}(y) f(y) dy,
\end{align*}
hence, from Proposition \ref{prop:B.Equivalent.Conditions.P}, for $\epsilon>0$ there exists $M_{0}=M_{0}(\epsilon,n_{0})$ and $t_{0}>0$ such that if $M>M_{0}$, $t<t_{0}$ and $0<x\le n_{0}$, then  $e^{-t\sqrt{\Delta_{\lambda}}}f_{2}(x)<\epsilon$ . On the other hand, again we have that $e^{-t\sqrt{\Delta_{\lambda}}}$ defines a symmetric diffusion semigroup as in \cite{S} and $f_{1}\in L^{1}_{loc}(\RR^{+},d\mu)$, therefore $$\lim\limits_{t\to 0}e^{-t\sqrt{\Delta_{\lambda}}}f_{1}(x)=f_{1}(x)$$ for almost every $|x|\le n_{0}$. Thus the proposition follows.

\end{proof}

\par Finally we give the proof for Theorem \ref{thm:Lpv.Lpu.boundness.maximal.B.heat.operator}, item (ii).

\begin{proof}
\par Let us fix $1<p<\infty$, $\lambda >0$ and $a>0$. Let $v\in D_{p}^{Poisson}(\Delta_{\lambda})$. This means that $||v^{-\frac{1}{p}}\phi^{\lambda}||_{p'}<\infty$.

\par We need to show that the local maximal operator $P^{\lambda,\ast}_{a}$ defined in \ref{B.local.maximal.operator.P} maps $L^{p}(v)$ to $L^{p}(u)$ boundedly, for some weight $u$. Moreover, if $\sigma <1$ we need to find a weight $u$ such that, $||u^{-\frac{\sigma}{p}}\phi^{\lambda}||_{p'}<\infty$.

\par For $f\in L^{p}(v)$ and $M>1$ to be chosen later, let us use Lemma \ref{L.estimate.P} to obtain the split
\begin{align*}
y^{2\lambda}P_{a}^{\lambda,\ast}f(x) \le & c_{\lambda,M} (x\wedge1)^{-2\lambda} \sup\limits_{0<t<a} \int\limits_{y\le M(x\vee 1)} P_{t}^{\triangle}(x-y) (y\wedge1)^{2\lambda} f(y)dy  \\
& + c_{\lambda,M} a  \int\limits_{y> M(x\vee 1)} \phi^{\lambda}(y) f(y) dy = Af(x) + Bf(x).
\end{align*}

\par Let us consider first $Af(x)$. Using one more time the standard argument of slicing the integral into dyadic shells we get that
\begin{align*}
Af(x) \le & c_{\lambda,M} (x \wedge 1)^{-2\lambda}  \mathcal{M}_{M}^{loc}(\tilde{f}),
\end{align*}
where $\tilde{f}= (y\wedge1)^{2\lambda}f(y) $ and $\mathcal{M}_{M}^{loc}$ is, as before,  the local maximal function considered in \ref{local.maximal.operator}, extended to $|y|\le M(|x|\vee 1)$, namely $$\mathcal{M}_{M}^{loc}f(x)=\sup_{r>0}\frac{1}{|B_{r}|} \int\limits_{B_{r}(x)}f(y)\chi_{|y|\le M(x\vee 1)}(y)dy.$$ Then, by using item (ii) of Theorem \ref{thm:Lpv.Lpu.boundness.local.maximal}, we get that there exists a weight $\tilde{u}_2$ such that
$$||\mathcal{M}_{M}^{loc}(\widetilde{f})||_{L^{p}(\tu_2)}\le C ||\widetilde{f}||_{L^{p}(\widetilde{v})}= C ||f||_{L^{p}(v)}$$ and
\begin{equation}\label{Eu}
\left|\left|\tu_2^{-\frac{\sigma}{p}}(x\vee1)^{-2}\right|\right|_{p'}<\infty \qquad \mbox{for $\sigma <1,$}
\end{equation}
provided that
\begin{equation}\label{condition v2}
\left|\left|\tv^{-\frac{1}{p}}(y\vee1)^{-2}\right|\right|_{p'}<\infty,
\end{equation}
where $\tv= (y\wedge1)^{-2\lambda p}$. Since $v\in D_{p}^{Poisson}(\Delta_{\lambda})$, (\ref{condition v2}) follows. This implies that Theorem \ref{thm:Lpv.Lpu.boundness.local.maximal}  holds. Now taking $u_{2}(x)=\tilde{u}_{2}(x\wedge 1)^{2\lambda p}$, it follows that $$||A(f)||_{L^{p}(u_{2})} \le c ||f||_{L^{p}(v)}$$ and $$||u_{2}^{-\frac{\sigma}{p}}\phi^{\lambda}||_{p'} \le \left|\left|\tilde{u}_{2}^{-\frac{\sigma}{p}}(x\wedge 1)^{2\lambda(1-\sigma)} (x\vee1)^{-2}\right|\right|_{p'}<\infty.$$

\par For $Bf(x)$ let us apply H\"{o}lder inequality to obtain
\begin{align*}
Bf(x) \le & c_{\lambda,M} a \int\limits_{y>M(x\vee 1)} \phi^{\lambda}v^{-\frac{1}{p}}v^{\frac{1}{p}}f(y)dy \\
\le & c_{\lambda,M} a ||f||_{L^{p}(v)} ||v^{-\frac{1}{p}}\phi^{\lambda}||_{p'}.
\end{align*}
Then, $$||B(f)||_{L^{p}(u_{1})} \le c ||f||_{L^{p}(v)}$$ if we choose $$u_{1}(x)=\frac{1}{(1+x)^{2}}.$$

\par The Theorem follows by considering a weight $$u(x)=\min\{u_{1}(x),u_{2}(x)\}.$$

\end{proof}

\subsection*{Acknowledgments}
The author thanks Beatriz Viviani for her kind guidance and advice in the development of this work.

\end{document}